\documentclass[11pt, a4paper]{article}
\usepackage{amsmath,amssymb,amsthm}
\usepackage{graphicx,xcolor}
\usepackage{subfig}

\usepackage[auth-sc, affil-it]{authblk}

\usepackage[utf8]{inputenc}
\usepackage[english]{babel}
\usepackage[T1]{fontenc}
\usepackage{lmodern}
\usepackage{microtype}
\usepackage[
    linktocpage=true,
    colorlinks=true,
    linkcolor=purple,
    citecolor=black,
    urlcolor=purple,
    bookmarksnumbered=true,
]{hyperref}

\usepackage[totalwidth=160mm, totalheight=247mm]{geometry}

\usepackage{enumitem}
\setlist{noitemsep}
\setlist[enumerate]{label=(\arabic*)}

\let\oldbibliography\thebibliography
\renewcommand{\thebibliography}[1]{%
  \oldbibliography{}%
  \small%
  \setlength{\itemsep}{0pt}%
}

\makeatletter
\def\@cite#1#2{[{{\bfseries#1}\if@tempswa , #2\fi}]} 
\renewcommand{\@biblabel}[1]{[{\bfseries{#1}}]~} 
\makeatother

\usepackage[format=plain,justification=justified]{caption}
\captionsetup{labelfont=bf}

\theoremstyle{plain}
\newtheorem{theo}{Theorem}[section]
\newtheorem{prop}[theo]{Proposition}
\newtheorem{lemm}[theo]{Lemma}

\theoremstyle{definition}
\newtheorem{defi}[theo]{Definition}
\newtheorem{exam}[theo]{Example}
\newtheorem{rema}[theo]{Remark}

\renewcommand{\leq}{\leqslant}
\renewcommand{\geq}{\geqslant}
\newcommand{\bbN}{\mathbb{N}}
\newcommand{\bbZ}{\mathbb{Z}}

\newcommand{\bbR}{\mathbb{R}}
\newcommand{\bbC}{\mathbb{C}}
\newcommand{\mcA}{\mathcal{A}}
\newcommand{\mcT}{\mathcal{T}}
\newcommand{\bfM}{\mathbf{M}}
\newcommand{\bfe}{\mathbf{e}}
\newcommand{\bfP}{\mathbf{P}}
\newcommand{\bfh}{\mathbf{h}}

\newcommand{\Ts}{\mcT_\sigma}
\newcommand{\Tt}{\mcT_\tau}
\newcommand{\Tth}{\mcT_\theta}
\newcommand{\Ms}{\mathbf{M}_\sigma}
\newcommand{\Mt}{\mathbf{M}_\tau}
\newcommand{\Mth}{\mathbf{M}_\theta}

\newcommand{\bfv}{\mathbf{v}}
\newcommand{\bfw}{\mathbf{w}}

\newcommand{\vbeta}{\mathbf{v}_\beta}
\newcommand{\wbeta}{\mathbf{w}_\beta}
\newcommand{\zbeta}{\mathbf{z}_\beta}
\DeclareMathOperator{\occ}{occ}
\DeclareMathOperator{\diag}{diag}
\DeclareTextFontCommand{\tdef}{\itshape\bfseries}
\newcommand{\myvcenter}[1]{\ensuremath{\vcenter{\hbox{#1}}}}

\title{\textbf{Rauzy fractals with countable fundamental group}}
\author[1,2]{Timo Jolivet}
\author[3]{Beno\^it Loridant}
\author[4]{Jun Luo}

\affil[1]{%
    Universit\'e Paris Diderot,
    LIAFA Case 7014,
    75205 Paris Cedex 13, France
}
\affil[2]{%
    Department of Mathematics,
    University of Turku 20014, Finland
}
\affil[3]{%
    Montanuniversit\"at Leoben,
    Franz Josefstrasse 18, Leoben 8700, Austria
}
\affil[4]{%
    Department of Statistics,
    Sun Yat-Sen University, Guangzhou 512075, China
}

\date{}

\begin{document}
\maketitle

\begin{abstract}
We prove that every free group of finite rank can be realized
as the fundamental group of a planar Rauzy fractal associated
with a $4$-letter unimodular cubic Pisot substitution.
This characterizes all countable fundamental groups for planar Rauzy fractals.
We give an explicit construction relying on two operations on substitutions:
symbolic splittings and conjugations by free group automorphisms.
\end{abstract}

\section{Introduction}
In 1982, Rauzy proved that the dynamical system generated by
the Tribonacci substitution $\sigma(1)=12, \sigma(2)=13, \sigma(3)=1$
is measure theoretically conjugate to an exchange of domains on a compact subset of
the plane with fractal boundary~\cite{Rau82}.
He even showed that this dynamical system is measure theoretically conjugate to
a translation on the two-dimensional torus: in other words, it has pure discrete spectrum.
These results were later generalized to every primitive irreducible unimodular Pisot substitution
satisfying combinatorial conditions called \emph{coincidence conditions}~\cite{AI01,CS01,IR06,BBK06}.
The Pisot conjecture states that such systems always have pure discrete spectrum~\cite{ABBLS14}.

The associated \emph{Rauzy fractals} and their subdomains are compact sets equal to the closure of their interior,
and they are attractors of graph directed iterated function systems~\cite{SW02,ST09}.
Besides these common properties, Rauzy fractals enjoy a great topological diversity.
In the literature, properties like connectedness,
homeomorphy to a closed disc for planar Rauzy fractals or
triviality of their fundamental group are investigated.
Most of these questions can be solved algorithmically for a given substitution~\cite{ST09}.

The study of Rauzy fractals and their topological properties is motivated by several applications.
Examples are the numeration systems with non-integer bases (see~\cite{Thu89} and the survey~\cite{BS05});
the computation of simultaneous Diophantine approximations~\cite{HM06};
the theory of tiling dynamical systems~\cite{S97,BBK06};
the generation of discrete planes related to multidimensional continued fraction algorithms~\cite{IO94,ABI02};
the relation with some topological invariants of tiling spaces~\cite{BDS09};
and the search for explicit Markov partitions for hyperbolic toral automorphisms~\cite{IO93,KV98,Adl98,Pra99}.

In the planar case, there are many known examples of Rauzy fractals which are homeomorphic to a disc,
or whose fundamental group is uncountable~\cite{Mes98,Mes06,ST09,LMST13}.
However, until now, no example with ``intermediate'' constellation is known,
where the fundamental group would be nontrivial, but countable.

In this article, we prove that such an intermediate situation occurs
by giving a method to construct explicit examples.
For any given $K \in \bbN$, we are able to construct a $4$-letter primitive unimodular cubic Pisot substitution
whose Rauzy fractal has a fundamental group isomorphic to the free group $F_K$ of rank $K$
(Theorem~\ref{theo:countablefg}).
This result is complete in the sense that every countable fundamental group of a planar Rauzy fractal
must be of this form (Proposition~\ref{prop:cl_she}).

Our method relies on two symbolic operations on substitutions
that induce manipulations on the subtiles of the associated Rauzy fractals,
namely \emph{symbol splittings} and \emph{conjugation by free group automorphisms}.
Questions about the effect of conjugation by free group automorphisms on Rauzy fractals
have already been raised in~\cite{Gah10} and~\cite{ABHS06}.
A consequence of our work is that
the fundamental group of the Rauzy fractal of a substitution $\sigma$
is not preserved after conjugation of $\sigma$  by free group automorphisms.

\paragraph{Outline}

The paper is organized as follows.
In Section~\ref{sect:prelim}, together with preliminary results, we recall that
Rauzy fractals can naturally be decomposed into \emph{subtiles} and \emph{subsubtiles}.
We then manipulate these tile subdivisions within the fractal in order to obtain the desired topological properties.
Our tools consist of two symbolic operations on substitutions:
\emph{symbol splittings} (Section~\ref{sect:split})
and \emph{conjugation by free group automorphisms} (Section~\ref{sect:fgaut}).
Our main results are proved in Section~\ref{sect:mainresults}.
Schematically, they are obtained via the following strategy (see Figure~\ref{fig:strategy}):
\begin{itemize}
\item[\subref{fig:strategy-a}] Start with a substitution $\sigma$ on three symbols
    whose Rauzy fractal and its subtiles are disklike.
\item[\subref{fig:strategy-b}] Take $n$ large enough such that the subtiles of $\sigma^n$
    consist of sufficiently small subsubtiles for the next two steps to be applicable (Proposition~\ref{prop:itersubtiles}).
\item[\subref{fig:strategy-c}] Split a symbol to isolate a subsubtile and
    turn it into a subtile of the Rauzy fractal of a new substitution $\tau$
    on four symbols (Proposition~\ref{prop:split}).
\item[\subref{fig:strategy-d}] Conjugate $\tau$ by a suitable free group automorphism $\rho$.
    The Rauzy fractal associated with $\rho^{-1}\tau\rho$ now has a hole (Proposition~\ref{prop:fgaut}).
\end{itemize}
\begin{figure}[ht]%
\centering
\subfloat[][Subtiles of $\sigma$]{%
    \label{fig:strategy-a}%
    \includegraphics[height=35mm]{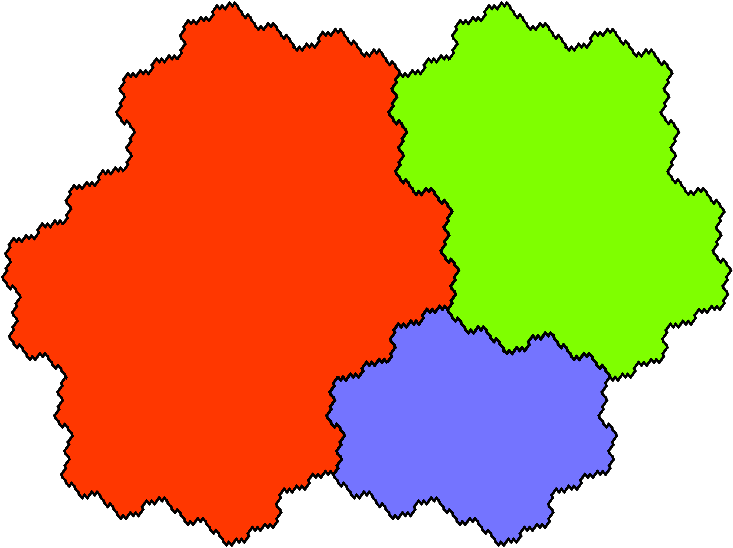}
} \hfil
\subfloat[][Subsubtiles of $\sigma^3$]{%
    \label{fig:strategy-b}%
    \includegraphics[height=35mm]{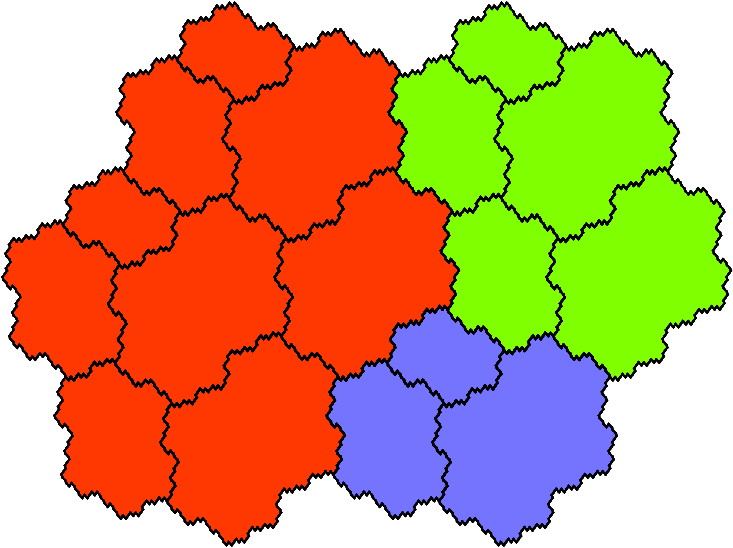}
}\\
\subfloat[][Subtiles after splitting a symbol in $\sigma^3$]{%
    \label{fig:strategy-c}%
    \includegraphics[height=35mm]{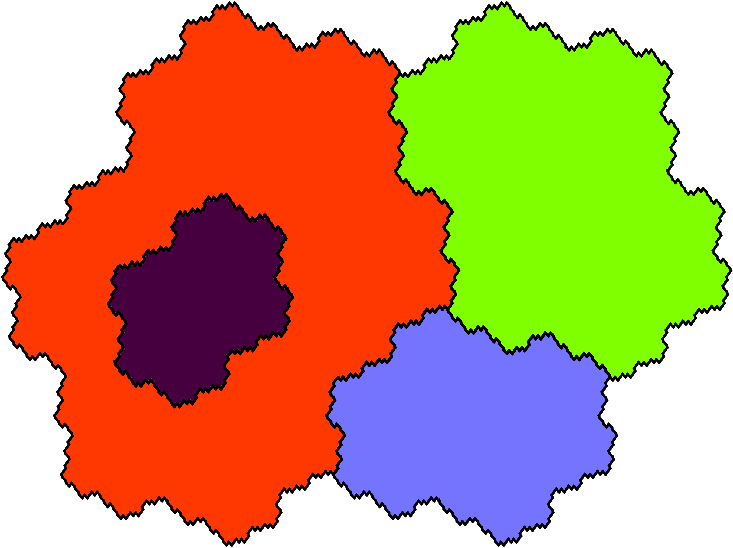}
} \hfil
\subfloat[][Subtiles after conjugating by a free group automorphism]{%
    \label{fig:strategy-d}%
    \includegraphics[height=35mm]{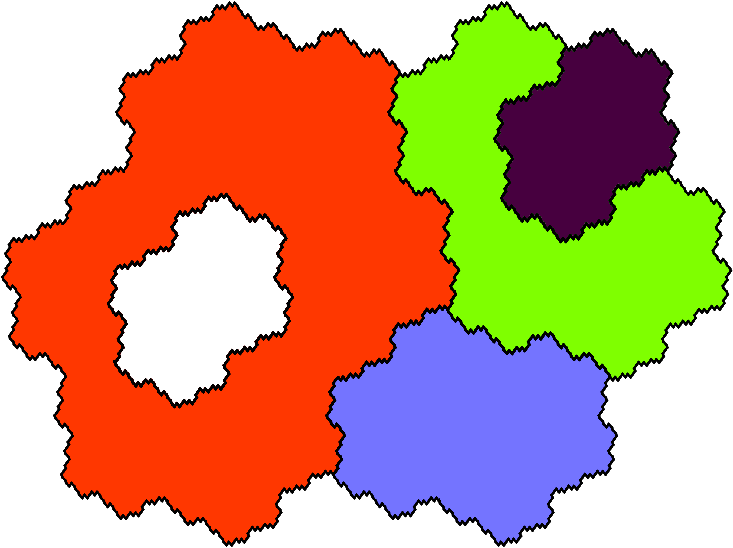}
}%
\caption[]{The main steps of our strategy.}%
\label{fig:strategy}%
\end{figure}

Note that the subtiles in Figure~\ref{fig:strategy}~\subref{fig:strategy-d} do not overlap (see Theorem~\ref{theo:countablefg}).
The fractals in Figure~\ref{fig:strategy}~\subref{fig:strategy-c} and~\ref{fig:strategy}~\subref{fig:strategy-d}
have different areas, this is explained in Remark~\ref{rema:crucial}.

\section{Preliminaries}
\label{sect:prelim}

In the following, $\mcA$ denotes a finite set of symbols,
and $\mcA^\star$ denotes the \tdef{free monoid over $\mcA$}
defined as the set of all finite words over $\mcA$,
where the composition of two words $u$ and $v$ is their concatenation $uv$.
If $w$ is an element of $\mcA^\star$ or $\mcA^\bbN$,
its $i$-th letter is denoted by~$w_i$.

\subsection{Substitutions}

Let $\mcA$ be a finite set of symbols.
A \tdef{substitution} is a non-erasing morphism of the free monoid~$\mcA^\star$,
\emph{i.e.}, a function $\sigma : \mcA^\star \rightarrow \mcA^\star$
such that $\sigma(uv) = \sigma(u)\sigma(v)$ for all words $u, v \in \mcA^\star$,
and such that $\sigma(a)$ is nonempty for every $a \in \mcA$.

We denote by $\bfP : \mcA^\star \rightarrow \bbZ^n$
the \tdef{Abelianization map} defined by
$\bfP(w) = (|w|_1, \ldots ,|w|_n)$,
where $|w|_i$ denotes the number of occurrences of $i$ in $w$.
The \tdef{incidence matrix} $\Ms$ of $\sigma$ is the matrix of size $n \times n$
whose $i$-th column is equal to $\bfP(\sigma(i))$ for every $i \in \mcA$.
A substitution $\sigma$ is
\begin{itemize}
\item \tdef{unimodular} if $\det(\Ms) = \pm 1$;
\item \tdef{primitive} if $\Ms$ is primitive
(all the entries of $\Ms^n$ are strictly positive for some $n \geq 1$);
\item \tdef{Pisot} if the dominant eigenvalue of $\Ms$ is a Pisot number: an algebraic integer $\beta>1$
    whose Galois conjugates $\beta_1,\ldots,\beta_d$ satisfy $\left|\beta_i\right| < 1$;
\item \tdef{irreducible} if the algebraic degree $d$ of the dominant eigenvalue $\beta$ of $\Ms$
    is equal to the size of the alphabet of $\sigma$.
\end{itemize}

An infinite word $u \in \mcA^\bbN$ is a \tdef{periodic point} of $\sigma$
if there exists $k \in \bbN$ such that $\sigma^k(u) = u$.
Such a periodic point always exists when $\sigma$ is primitive~\cite[Proposition~5.1]{Que10}.

\subsection{Rauzy fractals and subtiles}

Before defining Rauzy fractals we introduce the necessary algebraic setup.
Let $\sigma$ be a primitive unimodular Pisot substitution on the alphabet $\mcA = \{1, \ldots, n\}$,
and let $\beta$ be the Pisot the dominant real eigenvalue of $\bfM_\sigma$, a Pisot number of degree $d$.
Denote by $\beta_1, \ldots, \beta_r$ the $r$ real conjugates of $\beta$,
and denote by $\beta_{r+1}, \overline{\beta_{r+1}}, \ldots, \beta_{r+s}, \overline{\beta_{r+s}}$
the $2s$ complex conjugates of $\beta$ (we have $r + 2s = d-1$).
Let $\vbeta$ be a left eigenvector of $\bfM_\sigma$ associated with $\beta$.
Let $\pi_\sigma$ the projection given by
\[
\begin{array}{cccll}
\pi_\sigma & : &
\bbR^n
& \rightarrow &
\bbR^r \times \bbC^s \cong \bbR^{d-1} \\
& & \bfe_i
& \mapsto &
(
\langle \bfv_{\beta_1}, \bfe_i \rangle,
\ldots,
\langle \bfv_{\beta_{r+s}}, \bfe_i \rangle
)
\end{array}
\]
where each eigenvector $\bfv_{\beta_j}$ is obtained by replacing $\beta$ by $\beta_j$ in the coordinates of $\vbeta$.
Note that the conjugates $\smash{\overline{\beta_{r+1}}, \ldots, \overline{\beta_{r+s}}}$
are not taken into account in the definition of $\pi_\sigma$.

\begin{defi}
\label{defi:rf}
Let $\sigma$ be a primitive unimodular Pisot substitution on the alphabet $\mcA$
and let $u$ be a periodic point of $\sigma$.
The \tdef{Rauzy fractal} of $\sigma$ (with respect to $\vbeta$)
is the set $\Ts = \bigcup_{i \in \mcA} \Ts(i)$,
where for each $i \in \{1,\ldots,n\}$, $\Ts(i)$ is the \tdef{subtile} of type $i$ given by
\[
\Ts(i) = \overline{\{\pi_\sigma \bfP(u_1 \ldots u_m) : m \in \bbN \text{ and } u_{m+1} = i\}}.
\]
\end{defi}

\begin{rema}
\label{rema:crucial}
In the above definition,
the norm of $\vbeta$ \emph{does} affect the area of the sets $\Ts$ and $\Ts(i)$ up to an inflation factor.
Standard definitions of Rauzy fractals usually require $\|\vbeta\|_1 = 1$.
In this article, we will not put any restriction on the norm of $\vbeta$,
always specifying with respect to which $\vbeta$ we define a Rauzy fractal.
This will help us to avoid many technical difficulties
when relating different Rauzy fractals (living in different representation spaces)
in Proposition~\ref{prop:split}~and Proposition~\ref{prop:fgaut}.
\end{rema}

\subsection{Subsubtiles and graph-directed iterated function system}
\label{sec:subsub}

In the definitions below we will need the mapping
$\bfh_\sigma : \bbR^r \times \bbC^s \rightarrow \bbR^r \times \bbC^s$, defined by
$\bfh_\sigma(x) = \diag(\beta_1, \ldots, \beta_{r+s}) x$.
The mapping $\bfh_\sigma$ is contracting on $\bbR^r \times \bbC^s$
because $|\beta_i| < 1$ for $1 \leq i \leq r+s$.
It corresponds to the action of $\bfM_\sigma$ before projecting by $\pi_\sigma$,
in other words, $\pi_\sigma\Ms = \bfh_\sigma\pi_\sigma$.

In Definition~\ref{defi:rf}, we have given a decomposition of the tile $\Ts$ into its subtiles $\Ts(i)$.
In Sections~\ref{sect:split} and~\ref{sect:fgaut} we will need to decompose Rauzy fractals one step further:
each subtile $\Ts(i)$ can be decomposed into its \emph{subsubtiles} $\Ts(i,j;k)$,
defined below in Definition~\ref{defi:subsubtiles}.

Intuitively, each subsubtile of $\Ts(i)$ corresponds to an occurrence of $i$ in the words $\sigma(j)$.
We formalize the notion of occurrence before defining subsubtiles.
A pair $(j,k) \in \mcA \times \bbN$ is an \tdef{occurrence} of the symbol $i$ in $\sigma$ if
$\sigma(j)_k = i$, that is if the $k$-th letter of $\sigma(j)$ is $i$.
We will denote occurrences by $(j;k)$ to emphasize the fact that $j$ is an element of $\mcA$
and $k$ is an index.
The \tdef{set of occurrences} of $i$ in $\sigma$ is denoted by $\occ(\sigma,i)$.
For example, for $\sigma : 1 \mapsto 11213, 2 \mapsto 331, 3 \mapsto 1$ we have
$\occ(\sigma,1) = \{(1;1), (1;2), (1;4), (2;3), (3;1)\}$,
$\occ(\sigma,2) = \{(1;3)\}$ and
$\occ(\sigma,3) = \{(1;5), (2;1), (2;2)\}$.

\begin{defi}
\label{defi:subsubtiles}
Let $(j;k) \in \occ(\sigma,i)$.
The \tdef{subsubtile} $\Ts(i,j;k)$ is defined by
\[
\Ts(i,j;k) = \bfh_\sigma(\Ts(j)) + \pi_\sigma\bfP(\sigma(j)_1\cdots\sigma(j)_{k-1}).
\]
\end{defi}

Note that $\Ts(i,j;k)$ is defined only if $(j;k) \in \occ(\sigma,i)$.
The tiles $\Ts(i)$ are the solution of a graph-directed iterated function system,
which can be conveniently expressed in terms of subsubtiles and symbol occurrences in the following theorem.

\begin{prop}[\cite{SW02,EIR06}]
\label{prop:GIFS}
Let $\sigma$ be primitive unimodular Pisot substitution on the alphabet $\mcA$.
For every $i \in \mcA$ we have
\[\Ts(i) =
    \bigcup_{(j;k) \in \occ(\sigma,i)}
    \mcT_{\sigma}(i,j;k),
\]
and this union is measure-disjoint.
\end{prop}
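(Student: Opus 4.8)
The plan is to prove two things separately: the set equation $\Ts(i)=\bigcup_{(j;k)\in\occ(\sigma,i)}\Ts(i,j;k)$, and then the measure-disjointness of this finite union. The first is a direct unfolding computation; the second is where the real work lies, and I would route it through Perron--Frobenius.

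For the set equation I would first reduce to the case where $u=\sigma(u)$ is a genuine fixed point, using that the subtiles are independent of the chosen periodic point (so replacing $u$ by $\sigma^{k-1}(u)$ is harmless). The key tool is the prefix--suffix decomposition: given $m$ with $u_{m+1}=i$, there is a unique $p$ with $|\sigma(u_1\cdots u_p)|\le m<|\sigma(u_1\cdots u_{p+1})|$; writing $j=u_{p+1}$ and letting $k$ be the position of $u_{m+1}$ inside the block $\sigma(j)$, one gets $\sigma(j)_k=i$, hence $(j;k)\in\occ(\sigma,i)$, together with the factorization $u_1\cdots u_m=\sigma(u_1\cdots u_p)\,\sigma(j)_1\cdots\sigma(j)_{k-1}$. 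Abelianizing (using $\bfP\circ\sigma=\Ms\bfP$) and projecting, the relation $\pi_\sigma\Ms=\bfh_\sigma\pi_\sigma$ turns this into
\[
\pi_\sigma\bfP(u_1\cdots u_m)=\bfh_\sigma\,\pi_\sigma\bfP(u_1\cdots u_p)+\pi_\sigma\bfP(\sigma(j)_1\cdots\sigma(j)_{k-1}),
\]
which exhibits the left-hand point as an element of $\Ts(i,j;k)$ since $u_{p+1}=j$. This gives the inclusion of the generating set of $\Ts(i)$ into the (closed, hence closure-stable) right-hand union. The reverse inclusion is the same computation read backwards: a point of $\bfh_\sigma(\Ts(j))+\pi_\sigma\bfP(\sigma(j)_1\cdots\sigma(j)_{k-1})$ is, by continuity of the affine map $\bfh_\sigma$, a limit of projections of prefixes $\sigma(u_1\cdots u_p)\sigma(j)_1\cdots\sigma(j)_{k-1}$ of $u$, each followed by the letter $\sigma(j)_k=i$, and hence lies in $\Ts(i)$.

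For the measure-disjointness, let $\lambda$ be Lebesgue measure on $\bbR^{r}\times\bbC^{s}\cong\bbR^{d-1}$ and set $v_i=\lambda(\Ts(i))$, which is finite since each $\Ts(i)$ is compact. The crucial computation is the Jacobian of $\bfh_\sigma=\diag(\beta_1,\ldots,\beta_{r+s})$, which equals $\prod_{i=1}^r|\beta_i|\prod_{i=r+1}^{r+s}|\beta_i|^2$; unimodularity forces the product of the absolute values of all eigenvalues of $\Ms$ to be $1$, so this Jacobian equals $\beta^{-1}$. Hence $\lambda(\Ts(i,j;k))=\beta^{-1}v_j$ for every occurrence, and subadditivity combined with the set equation yields, since the number of occurrences with first coordinate $j$ is $(\Ms)_{ij}$,
\[
v_i\le\beta^{-1}\sum_{j\in\mcA}(\Ms)_{ij}\,v_j,\qquad\text{that is}\qquad \beta v\le\Ms v
\]
componentwise, with $v\ge0$.

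The main obstacle — and the whole point of the argument — is to upgrade this inequality to an equality, which is exactly what forces the overlaps to be null. Here I would invoke Perron--Frobenius: since $\Ms$ is primitive it has a strictly positive left eigenvector $w$ with $w^\top\Ms=\beta w^\top$, and then $0\le w^\top(\Ms v-\beta v)=w^\top\Ms v-\beta w^\top v=0$; as $w>0$ and $\Ms v-\beta v\ge0$, this forces $\Ms v=\beta v$. Thus equality holds in the subadditivity bound for each $i$, and equality in a finite subadditive estimate means precisely that the sets $\Ts(i,j;k)$, $(j;k)\in\occ(\sigma,i)$, are pairwise measure-disjoint, as claimed. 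I expect the two delicate points to be the reduction to a fixed point in the set equation (needing independence of the subtiles from the chosen periodic point) and the clean identification of the contraction ratio $\beta^{-1}$ from unimodularity; everything else is bookkeeping.
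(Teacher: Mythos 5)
The paper itself offers no proof of this proposition: it is quoted from the literature, with the set equation and measure-disjointness attributed to \cite{SW02} in the irreducible case and \cite{EIR06} in the reducible case. So there is no internal proof to compare against; judged on its own merits, your route --- prefix decomposition of a fixed point for the set equation, then the Perron--Frobenius rigidity argument (subadditivity gives $\beta v\leq \Ms v$ componentwise, pairing with a strictly positive left eigenvector forces $\Ms v=\beta v$, and equality in finite subadditivity is equivalent to pairwise null overlaps) --- is exactly the standard argument of the cited references, and its core is sound.

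Two steps, however, are stated in a way that leaves genuine gaps, and both are relevant to this paper. First, the reduction to a fixed point: $\sigma^{k-1}(u)$ is \emph{not} a fixed point of $\sigma$ (one has $\sigma(\sigma^{k-1}(u))=u$, not $\sigma^{k-1}(u)$), and a primitive substitution need not have any fixed point at all (e.g.\ $1\mapsto 2$, $2\mapsto 12$). What your prefix computation actually establishes is a relation between the subtiles defined from the periodic point $u=\sigma(u')$ and those defined from $u'=\sigma^{k-1}(u)$; turning this into the self-referential set equation requires knowing that the subtiles do not depend on the choice of periodic point. You invoke this independence, and it is true (it rests on primitivity), but it is a nontrivial fact that your proof leaves entirely unproved --- it is the real content hidden in the word ``harmless.'' Second, the Jacobian: you deduce that the Jacobian of $\bfh_\sigma$ equals $\beta^{-1}$ from the fact that unimodularity makes the product of the absolute values of \emph{all} eigenvalues of $\Ms$ equal to $1$. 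That inference is valid only in the irreducible case, where the eigenvalues of $\Ms$ are precisely $\beta$ and its Galois conjugates. In the reducible case --- which is the case this paper actually needs, since its main theorem concerns $4$-letter substitutions with cubic $\beta$, and which the proposition explicitly covers --- $\Ms$ has additional eigenvalues, while the Jacobian of $\bfh_\sigma=\diag(\beta_1,\ldots,\beta_{r+s})$ involves only the $d-1$ conjugates of $\beta$. The repair is short: the characteristic polynomial of $\Ms$ has constant term $\pm 1$, so by Gauss's lemma each of its monic irreducible integer factors has constant term $\pm 1$; in particular the minimal polynomial of $\beta$ does, so $\beta$ is an algebraic unit and $\beta\prod_{i}|\beta_i|=1$ regardless of the extra eigenvalues. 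With these two points fixed, your proof is complete and coincides with the arguments of \cite{SW02} and \cite{EIR06}.
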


The proof for the measure-disjointness is given in~\cite{SW02} for the irreducible case
and in~\cite{EIR06} for the reducible case. We also refer to~\cite{AI01,BS05,ST09,CANT}.

\begin{exam}
\label{exam:subsubtiles}
Let $\sigma : 1 \mapsto 21, 2 \mapsto 31, 3 \mapsto 1$.
The subsubtiles of $\sigma^3 : 1 \mapsto 1213121, 2 \mapsto 213121, 3 \mapsto 3121$
are plotted in Figure~\ref{fig:strategy}~\subref{fig:strategy-b}.
The~$9$ subsubtiles of $ \mcT_{\sigma^3}(1)=\Ts(1)$ correspond to the $9$ occurrences of $1$ in $\sigma^3$;
the~$5$ subsubtiles of $\mcT_{\sigma^3}(2)$ correspond to the $5$ occurrences of $2$ in $\sigma^3$;
the~$3$ subsubtiles of $\mcT_{\sigma^3}(3)$ correspond to the $3$ occurrences of $3$ in $\sigma^3$.
\end{exam}

\begin{rema}
\label{rema:SCC}
According to see~\cite{AI01}, the subtiles $\Ts(i)$, $i\in\mcA$, are measure-disjoint
if $\sigma$ satisfies the \tdef{strong coincidence condition}:
for every $(j_1,j_2) \in \mcA^2$,
there exists $k \in \bbN$ and $i \in \mcA$
such that $\sigma^k(j_1) = p_1 i s_1$ and $\sigma^k(j_2) = p_2 i s_2$
with $\bfP(p_1) = \bfP(p_2)$ or $\bfP(s_1) = \bfP(s_2)$.
\end{rema}

\begin{rema}
\label{rema:powers}
We mention that for $N\geq 1$ and $i \in \mcA$
we have $\Ts(i)=\mcT_{\sigma^N}(i)$.
Moreover, iterating the above equation, we obtain for all $N\geq 1$ and for every $i \in \mcA$ we have
\[
\Ts(i) =
    \bigcup_{(j;k) \in \occ(\sigma^N,i)}
    \mcT_{\sigma^N}(i,j;k).
\]
\end{rema}

\subsection{Countable fundamental groups of Rauzy fractals are free}

We now prove that free groups of finite rank are the only possible countable fundamental groups of Rauzy fractals.
Let us recall the following basic notions and results from topology~\cite{WhyDuda79}.
A topological space $X$ is a \tdef{continuum} if it is compact and connected.
It is \tdef{locally connected} if it has a base of connected sets.
A \tdef{path from $x$ to} $y$ in $X$ is a continuous function $f:[0,1]\to X$ with $f(x)=0$ and $f(y)=1$.
$X$ is \tdef{path-connected} if every two points of $X$ are joined by a path,
and \tdef{locally path-connected} if it has a base of path-connected sets.
It follows from the theorem of Hahn-Mazurkiewicz that any locally connected continuum is path-connected.
Moreover, in a metric space, every locally connected continuum is locally path-connected
by results of Mazurkiewicz, Moore and Menger (see~\cite[Section~50, Chapter~II, p.~254]{Kur68}).

\begin{prop}
\label{prop:cl_she}
Let $\sigma$ be a primitive unimodular Pisot substitution
and let $\Ts$ be its Rauzy fractal.
Suppose that $\Ts$ and its subtiles are planar locally connected continua.
If the fundamental group of $\Ts$ is countable,
then it is isomorphic to the free group $F_K$ on $K$ generators for some finite rank~$K$.
\end{prop}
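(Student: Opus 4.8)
The plan is to reduce the computation of $\pi_1(\Ts)$ to the combinatorics of the holes of $\Ts$, and to show that countability of $\pi_1$ forces the number of holes to be finite. Write $X=\Ts\subseteq\bbR^2$; by hypothesis $X$ is a planar Peano continuum (compact, connected, locally connected), hence locally path-connected by the Mazurkiewicz–Moore–Menger results recalled above. Its complement $\bbR^2\setminus X$ is open, so it has at most countably many components, exactly one of which is unbounded; I call the bounded ones the \emph{holes} $H_1,H_2,\ldots$ of $X$. Since $X$ is locally connected, its complementary domains form a null sequence, i.e. $\operatorname{diam}(H_i)\to 0$ (see \cite{WhyDuda79,Kur68}); moreover each $H_i$ is simply connected and $\partial H_i\subseteq X$ is itself a Peano continuum. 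The proof then splits into two claims: (a) if there are infinitely many holes then $\pi_1(X)$ is uncountable, and (b) if there are exactly $K$ holes then $\pi_1(X)\cong F_K$. Together with the countability hypothesis, (a) forces finitely many holes and (b) concludes.

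For claim (a), the idea is to detect the holes by winding numbers and produce a shrinking wedge of loops of Hawaiian-earring type. Fix a point $x_i\in H_i$ for each $i$. For a loop $\gamma$ in $X$ the winding number $w(\gamma,x_i)$ is defined (since $x_i\notin X$) and invariant under homotopy in $X$, so $[\gamma]\mapsto(w(\gamma,x_i))_i$ is a homomorphism $\pi_1(X)\to\bbZ^{\bbN}$. Because each $H_i$ is a simply connected complementary domain with locally connected boundary, Carath\'eodory's theorem provides a loop $\gamma_i$ parametrizing $\partial H_i\subseteq X$ with $w(\gamma_i,x_i)=1$; since $x_m\in H_m$ lies outside $\overline{H_i}$ for $m\neq i$ (the $H_m$ are disjoint open subsets of $\bbR^2\setminus X$), one gets $w(\gamma_i,x_m)=\delta_{im}$. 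Passing to a subsequence, the holes accumulate at some $p\in X$ with $\operatorname{diam}(H_i)\to 0$; joining a basepoint of each $\gamma_i$ to $p$ by a short path in $X$ (possible by local path-connectivity) and placing the $i$-th factor on an interval of length $2^{-i}$ yields, for every $(a_i)\in\{0,1\}^{\bbN}$, a genuine continuous loop $\prod_i\gamma_i^{a_i}$ based at $p$ whose winding-number vector is exactly $(a_i)$. Distinct sequences give loops with distinct winding-number vectors, hence distinct elements of $\pi_1(X)$; as there are uncountably many such sequences, $\pi_1(X)$ is uncountable. This step is the crux of the argument: the delicate points are the Carath\'eodory parametrization of the fractal boundaries $\partial H_i$ and the verification that the infinite concatenation is continuous and realizes the claimed winding numbers.

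For claim (b), let $\widehat X=X\cup H_1\cup\cdots\cup H_K$ be the continuum obtained by filling the $K$ holes. Then $\widehat X$ is a Peano continuum that does not separate the plane, hence an absolute retract by Borsuk's theorem, and in particular contractible. Writing $X=\widehat X\setminus\bigcup_{i=1}^{K}H_i$ exhibits $X$ as the complement, inside a contractible planar set, of $K$ disjoint simply connected open holes; using the Carath\'eodory parametrizations of the $\partial H_i$ to thicken each boundary into a collar and applying van Kampen's theorem inductively (one free generator per hole, no relations since $\widehat X$ is simply connected and the holes are unlinked in the plane), one obtains that $X$ is homotopy equivalent to a wedge of $K$ circles, so $\pi_1(X)\cong F_K$. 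This direction is essentially the classical fact that a planar Peano continuum with finitely many complementary domains is homotopy equivalent to a finite graph whose first Betti number equals the number of bounded domains; the only technical care needed is to accommodate the fractal, non-rectifiable boundaries, which the local connectivity of $X$ together with Carath\'eodory's theorem handle.
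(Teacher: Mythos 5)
Your strategy is genuinely different from the paper's: the paper simply invokes a theorem of Conner and Lamoreaux (for a connected, locally path-connected planar set, the fundamental group is non-free if and only if it is uncountable), together with Shelah-type finiteness, whereas you attempt a self-contained dichotomy on the number of complementary domains. Your claim (a) is essentially sound: the null-sequence property of the complementary domains of a planar Peano continuum, the Torhorst theorem ($\partial H_i$ is a Peano continuum), the Carath\'eodory parametrization, and the winding-number homomorphism do combine, as you describe, into a Hawaiian-earring-type argument showing that infinitely many holes force $\pi_1$ to be uncountable. (Your claim $w(\gamma_i,x_m)=\delta_{im}$ is correct; the cleanest justification for $m\neq i$ is that $\gamma_i$ extends to a continuous map of the closed disc into $\overline{H_i}$, which misses $x_m$, so $\gamma_i$ is null-homotopic in the complement of $x_m$.)

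The genuine gap is in claim (b). First, ``thicken each boundary into a collar'' is not available as stated: $\partial H_i$ need not be a Jordan curve (it is only a Peano continuum, possibly with cut points), so it has no collar neighborhood; at best Carath\'eodory gives a one-sided collar \emph{inside} the hole $H_i$, namely $F_i(\{1-\varepsilon<|z|<1\})$ where $F_i$ is the extended Riemann map of $H_i$. Second, and more seriously, even after this repair (take $U=X\cup\bigcup_i F_i(\{1-\varepsilon<|z|<1\})$, which deformation retracts onto $X$, take $V_i=H_i$, and apply van Kampen to $\widehat{X}=U\cup V_1\cup\dots\cup V_K$), what you can extract is only
$1=\pi_1(\widehat{X})\cong \pi_1(U)\big/\langle\langle[\gamma_1],\dots,[\gamma_K]\rangle\rangle$,
i.e.\ that $\pi_1(X)$ is the \emph{normal closure} of the $K$ boundary loops. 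Being normally generated by $K$ elements is far weaker than being free of rank $K$: the unknown group $\pi_1(X)$ sits inside the decomposition, so van Kampen produces a relation on it rather than a computation of it, and ``no relations since the holes are unlinked'' does not follow. The freeness of the fundamental group of a planar set is precisely the hard content that the paper outsources to Conner--Lamoreaux (or Shelah); to complete your route you would need either that theorem, or Borsuk's ANR theory (a planar Peano continuum with finitely many complementary domains is an ANR, hence homotopy equivalent to a finite graph), after which the rank is pinned to $K$ by your winding-number surjection onto $\bbZ^K$ together with Alexander duality. As written, claim (b) -- which the proposition needs in full, since it asserts the isomorphism with $F_K$ -- is not proved.
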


\begin{proof}
The result follows directly from a theorem of Conner and Lamoreaux~\cite[Theorem~3.1]{CL05},
which states that if a planar set is connected and locally path-connected,
then its fundamental group is not free if and only if it is uncountable.
Note that this result can also be proved using a theorem of Shelah~\cite{She88}.
\end{proof}

\section{Symbol splittings}
\label{sect:split}

We now define a symbolic operation, \emph{symbol splitting},
that we will use in Proposition~\ref{prop:split}.

\begin{defi}
Let $\sigma$ be a substitution on the alphabet $\mcA$,
let $a \in \mcA$,
let $b \notin \mcA$ be a new symbol
and let $I \subseteq \occ(\sigma,a)$ be a nonempty set of occurrences of $a$ in $\sigma$.
The \tdef{splitting} of symbol $a$ to the new symbol $b$ with occurrences $I$
is the substitution $\tau$ defined by
\[
\tau(i) =
\begin{cases}
\text{the word $\sigma(i)$ in which $\sigma(i)_k$ is replaced by $b$ for every $(i;k) \in I$, if  $i \neq b$,} \\
\tau(a), \text{ if } i = b.
\end{cases}
\]
\end{defi}

See Example~\ref{exam:split} for an example of symbol splitting.
Note that if $\sigma$ is a primitive unimodular Pisot substitution,
then so also is any splitting $\tau$ arising from $\sigma$ (Lemma~\ref{lemm:split_lemm1}).
Moreover, we have $\chi_\tau(x) = x\cdot\chi_\sigma(x)$,
where $\chi_\sigma$ and $\chi_\tau$ are the characteristic polynomials
of $\bfM_\sigma$ and $\bfM_\tau$, respectively.
The action of symbol splittings on the Rauzy fractal of a substitution $\sigma$ is described in the next proposition,
and is illustrated in Example~\ref{exam:split}.

\begin{prop}
\label{prop:split}
Let
\begin{itemize}
\item
$\sigma$ be a primitive unimodular Pisot substitution on the alphabet $\mcA = \{1, \ldots, n\}$,
\item
$\tau$ be obtained by splitting of $\sigma$ from symbol $a$ to a new symbol $b = n+1$ with occurrences $I\subseteq \occ(\sigma,a)$,
\item
$\vbeta = (v_1, \ldots, v_n)\in\bbR^n$ be a left eigenvector of $\Ms$ associated with $\beta$,
\item
$\wbeta = (v_1, \ldots, v_n, v_a)\in\bbR^{n+1}$
    (which is a left eigenvector of $\Mt$ associated with $\beta$, see Lemma~\ref{lemm:split_lemm1}),
\item
$\Ts$ be the Rauzy fractal of $\sigma$ (with respect to the eigenvector $\vbeta$),
\item
$\Tt$ be the Rauzy fractal of $\tau$ (with respect to the eigenvector $\wbeta$).
\end{itemize}
We have
\begin{enumerate}[itemsep=2pt]
\item
\label{item:split1}
$\Tt(i) = \Ts(i)$ if $i \notin \{a,b\}$,$\phantom{\displaystyle\bigcup_{(}}$
\item
\label{item:split2}
$\Tt(a) = \displaystyle\bigcup_{(j;k) \in \occ(\sigma,a) \setminus I} \Ts(a,j;k)$,
\item
\label{item:split3}
$\Tt(b) = \displaystyle\bigcup_{(j;k) \in I} \Ts(a,j;k)$.
\end{enumerate}
\end{prop}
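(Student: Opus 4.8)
The plan is to introduce the candidate tuple $(X(i))_{i\in\{1,\dots,n+1\}}$ whose entries are the right-hand sides of the three stated equalities (that is, $X(i)=\Ts(i)$ for $i\notin\{a,b\}$, $X(a)=\bigcup_{(j;k)\in\occ(\sigma,a)\setminus I}\Ts(a,j;k)$ and $X(b)=\bigcup_{(j;k)\in I}\Ts(a,j;k)$), and to prove $X(i)=\Tt(i)$ for all $i$ by showing that $(X(i))_i$ solves the graph-directed iterated function system that $\Tt$ satisfies (Proposition~\ref{prop:GIFS} applied to $\tau$). Since $\tau$ is again primitive unimodular Pisot (Lemma~\ref{lemm:split_lemm1}), that GIFS is contracting and admits a unique solution among tuples of nonempty compact sets; hence it suffices to verify that $(X(i))_i$ is such a solution.

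The verification rests on two compatibility identities between the $\sigma$- and $\tau$-worlds. First, because $\chi_\tau(x)=x\,\chi_\sigma(x)$, the matrices $\Ms$ and $\Mt$ share the same dominant eigenvalue $\beta$ and the same Galois conjugates, so the representation space $\bbR^r\times\bbC^s$ and the contraction are literally identical: $\bfh_\tau=\bfh_\sigma=:\bfh$. Second, let $\phi:\{1,\dots,n+1\}^\star\to\{1,\dots,n\}^\star$ be the monoid morphism that sends $b\mapsto a$ and fixes every other letter, so that $\phi\circ\tau=\sigma\circ\phi$. Because $\wbeta$ repeats the $a$-th coordinate of $\vbeta$ in position $b$, the $b$-th coordinate of each conjugate eigenvector $\bfw_{\beta_j}$ equals its $a$-th coordinate, whence $\pi_\tau(\bfe_b)=\pi_\sigma(\bfe_a)$ and $\pi_\tau(\bfe_i)=\pi_\sigma(\bfe_i)$ for $i\le n$. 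Comparing values on letters and using additivity of $\bfP$ yields the key identity $\pi_\tau\bfP=\pi_\sigma\bfP\,\phi$. Setting $p_{j,k}:=\pi_\sigma\bfP(\sigma(j)_1\cdots\sigma(j)_{k-1})$ and using that $\phi$ acts letterwise, this gives $\pi_\tau\bfP(\tau(j')_1\cdots\tau(j')_{k'-1})=p_{\phi(j'),k'}$, so the translation vectors of the two systems agree (with $b$ playing the role of $a$).

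Next I carry out the occurrence bookkeeping. Every occurrence of a letter in $\sigma(j)$ with $j\le n$ persists in $\tau(j)$ (a position with $\sigma(j)_k=a$ now reads $b$ if $(j;k)\in I$ and still reads $a$ otherwise), while the new letter $b$, through $\tau(b)=\tau(a)$, contributes a faithful copy $(b;k)$ of every occurrence $(a;k)$ of $\sigma(a)$. Concretely $\occ(\tau,b)=I\cup\{(b;k):(a;k)\in I\}$, $\occ(\tau,a)=(\occ(\sigma,a)\setminus I)\cup\{(b;k):(a;k)\in\occ(\sigma,a)\setminus I\}$, and $\occ(\tau,i)=\occ(\sigma,i)\cup\{(b;k):(a;k)\in\occ(\sigma,i)\}$ for $i\notin\{a,b\}$. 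Together with $X(a)\cup X(b)=\Ts(a)$ (immediate from the $\sigma$-GIFS for $\Ts(a)$), the equations are then checked by a single \emph{merge} mechanism: whenever a summand arises from an occurrence $(a;k)$ whose pre-image letter is $a$ itself, the $\tau$-system produces the two pieces $\bfh(X(a))+p_{a,k}$ and $\bfh(X(b))+p_{a,k}$ from $(a;k)$ and $(b;k)$, and their union is $\bfh(X(a)\cup X(b))+p_{a,k}=\bfh(\Ts(a))+p_{a,k}=\Ts(a,a;k)$, exactly the original $\sigma$-subsubtile. For $i=b$ this regrouping turns $\bigcup_{(j';k')\in\occ(\tau,b)}\bfh(X(j'))+p_{\phi(j'),k'}$ into $\bigcup_{(j;k)\in I}\Ts(a,j;k)=X(b)$; the cases $i=a$ and $i\notin\{a,b\}$ are identical and recover $X(a)$ and $\Ts(i)=X(i)$.

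Finally, $(X(i))_i$ consists of nonempty compact sets (each is a finite union of $\sigma$-subsubtiles; $X(b)\neq\emptyset$ since $I\neq\emptyset$, and $X(a)\neq\emptyset$ since $\occ(\tau,a)\neq\emptyset$ by primitivity of $\tau$) and solves the $\tau$-GIFS, so by uniqueness of its attractor it equals $(\Tt(i))_i$, which is the assertion. I expect the main obstacle to be precisely this occurrence bookkeeping and the merge: an occurrence of $a$ \emph{inside} $\sigma(a)$ is duplicated by the new letter $b$, so the subsubtile $\Ts(a,a;k)$ is split between $\Tt(a)$ and $\Tt(b)$ and must be reassembled correctly. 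Everything else reduces to the two compatibility identities above, which hinge on the specific choice $\wbeta=(v_1,\dots,v_n,v_a)$ and hence on not normalizing $\vbeta$ (cf.\ Remark~\ref{rema:crucial}).
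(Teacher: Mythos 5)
Your proof is correct, but it pins down $\Tt$ by a genuinely different mechanism than the paper. The paper's engine is a direct periodic-point computation: inside the proof of Lemma~\ref{lemm:split_lemm2} it shows that if $u$ is a periodic point of $\tau$, then its image $u'$ under your letter-to-letter projection $\phi$ (sending $b\mapsto a$) is a periodic point of $\sigma$ with $\pi_\tau\bfP(u_1\cdots u_m)=\pi_\sigma\bfP(u'_1\cdots u'_m)$, which yields $\Tt(i)=\Ts(i)$ for $i\notin\{a,b\}$ and $\Tt(a)\cup\Tt(b)=\Ts(a)$ straight from Definition~\ref{defi:rf}; it then decomposes each $\Tt(i)$ via Proposition~\ref{prop:GIFS} and regroups the paired occurrences $(a;k)$, $(b;k)$ exactly as in your ``merge'' step. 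You never touch the definition of $\Tt$ at all: you verify that the candidate tuple $(X(i))_i$ solves the $\tau$-GIFS and invoke uniqueness of its attractor among tuples of nonempty compact sets. The shared content (the identities $\bfh_\tau=\bfh_\sigma$ and $\pi_\tau\bfP=\pi_\sigma\bfP\,\phi$, the occurrence bookkeeping, the merge of $(a;k)$ with $(b;k)$) is essentially identical; what differs is the closing argument, and yours has a cost worth acknowledging: Proposition~\ref{prop:GIFS} as stated only asserts that $(\Tt(i))_i$ \emph{satisfies} the set equations, not that the nonempty compact solution is unique, so the uniqueness you rely on is an extra ingredient --- standard (Banach's fixed point theorem on the product of hyperspaces with the Hausdorff metric, as in the graph-directed IFS literature~\cite{SW02,ST09}), but not proved or stated in the paper, and valid only when every letter of $\tau$ has at least one occurrence, i.e.\ when $\tau$ is primitive; this is the same hypothesis your nonemptiness check of $X(a)$ uses, and it quietly forces $I\subsetneq\occ(\sigma,a)$ (a point the paper also glosses over). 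In exchange, your route avoids all manipulation of periodic points and closures of projected prefixes, reducing the proposition to finite combinatorics plus a fixed-point theorem, whereas the paper's argument is self-contained relative to Definition~\ref{defi:rf} and additionally produces the intermediate identity $\Tt(a)\cup\Tt(b)=\Ts(a)$, which it reuses.
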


\begin{exam}
\label{exam:split}
Let $\sigma : 1 \mapsto 1213121, 2 \mapsto 213121, 3 \mapsto 3121$.
We split the symbol $a = 1$ to the new symbol $b = 4$
with occurrences $I = \{(1;1), (2;6), (3;2)\}$ of $1$ in $\sigma$.
The resulting substitution $\tau$ and its Rauzy fractal are shown below.
(The tiles associated with $4$ are shown in black.)
\[
\tau :
\begin{cases}
1 \mapsto 4213121 \\
2 \mapsto 213124 \\
3 \mapsto 3421 \\
4 \mapsto 4213121
\end{cases}
\qquad
\qquad
\myvcenter{\includegraphics[height=3cm]{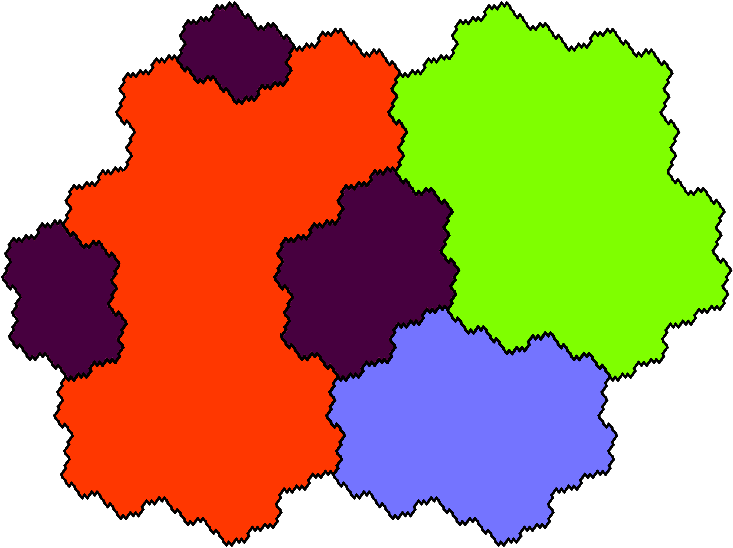}}
\]
\end{exam}

In order to prove Proposition~\ref{prop:split}
we need Lemma~\ref{lemm:split_lemm1} and Lemma~\ref{lemm:split_lemm2} below.

\begin{lemm}
\label{lemm:split_lemm1}
Under the hypotheses of Proposition~\ref{prop:split},
$\beta$ is an eigenvalue of $\Mt$
and $\wbeta$ a left eigenvector of $\Mt$ associated with $\beta$.
Hence, the Rauzy fractal $\Tt$ mentioned in the statement of Proposition~\ref{prop:split} is well-defined.
\end{lemm}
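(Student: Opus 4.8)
The plan is to compute the incidence matrix $\Mt$ explicitly in terms of $\Ms$ and then verify the eigenvector relation by a direct, essentially telescoping, calculation; the claim about well-definedness then follows once the full spectrum of $\Mt$ is located. First I would record the structure of $\Mt$. For each $i\in\mcA$ write $c_i=|\{k : (i;k)\in I\}|$ for the number of occurrences of $a$ in $\sigma(i)$ that get relabelled to $b$. By definition of the splitting, passing from $\sigma$ to $\tau$ leaves every letter count unchanged except that $c_i$ of the $a$'s in $\sigma(i)$ become $b$'s, and moreover $\tau(b)=\tau(a)$. Hence, indexing the rows and columns of the $(n+1)\times(n+1)$ matrix $\Mt$ by $1,\dots,n,b$ with $b=n+1$, one has $(\Mt)_{k,i}=(\Ms)_{k,i}$ for $k\in\mcA\setminus\{a\}$ and all $i\in\mcA$, while $(\Mt)_{a,i}=(\Ms)_{a,i}-c_i$ and $(\Mt)_{b,i}=c_i$; the column of index $b$ coincides with the column of index $a$ (because $\bfP(\tau(b))=\bfP(\tau(a))$), so in particular $(\Mt)_{b,b}=c_a$. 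This is exactly the matrix one reads off in Example~\ref{exam:split}, where the columns of index $1$ and $4$ are equal.

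The heart of the proof is to check $\wbeta\Mt=\beta\wbeta$ with $\wbeta=(v_1,\dots,v_n,v_a)$, which I would do column by column using $(\wbeta)_k=v_k$ for $k\in\mcA$, $(\wbeta)_b=v_a$, and $\vbeta\Ms=\beta\vbeta$. For a column $i\in\mcA$,
\[
(\wbeta\Mt)_i=\sum_{k\in\mcA\setminus\{a\}}v_k(\Ms)_{k,i}+v_a\big((\Ms)_{a,i}-c_i\big)+v_a\,c_i=\sum_{k\in\mcA}v_k(\Ms)_{k,i}=\beta v_i,
\]
the contribution $v_a\,c_i$ of the new row $b$ cancelling the correction $-v_a\,c_i$ made in row $a$. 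For the column of index $b$, using that it equals the column of index $a$,
\[
(\wbeta\Mt)_b=\sum_{k\in\mcA\setminus\{a\}}v_k(\Ms)_{k,a}+v_a\big((\Ms)_{a,a}-c_a\big)+v_a\,c_a=\sum_{k\in\mcA}v_k(\Ms)_{k,a}=\beta v_a=\beta(\wbeta)_b.
\]
Together these show that $\beta$ is an eigenvalue of $\Mt$ with associated left eigenvector $\wbeta$, which is assertions (a) and (b) of the lemma.

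It remains to justify that $\Tt$ is well-defined. Since the column of index $b$ equals the column of index $a$, the vector $\bfe_a-\bfe_b$ lies in the kernel of $\Mt$, so $0$ is an eigenvalue; collapsing $b$ back onto $a$ (the abelianisation of the letter-to-letter map $b\mapsto a$ that turns $\tau$ into $\sigma$) identifies the induced action of $\Mt$ on $\bbR^{n+1}/\langle\bfe_a-\bfe_b\rangle$ with $\Ms$, whence $\chi_\tau(x)=x\,\chi_\sigma(x)$. Thus the spectrum of $\Mt$ is that of $\Ms$ together with an extra $0$, so $\beta$ is still the strictly dominant eigenvalue and is Pisot, while primitivity of $\tau$ follows routinely from primitivity of $\sigma$ and $I\neq\emptyset$ (the letter $b$ occurs in some $\tau(i)$ and thereafter reproduces the image of $a$). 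Building the projection $\pi_\tau$ from $\wbeta$ and its Galois conjugates $\bfw_{\beta_j}$, biorthogonality shows that $\pi_\tau$ annihilates both the $\beta$-eigenspace and the $0$-eigenspace, so the usual contraction argument via $\pi_\tau\Mt=\bfh_\tau\pi_\tau$ (with $\bfh_\tau$ contracting) bounds the defining point set and $\Tt$ is well-defined in the sense of Definition~\ref{defi:rf}. The one point requiring care — and the only place where the reducibility of $\tau$ genuinely enters — is exactly this extra $0$ eigenvalue: I would emphasise that it is harmless precisely because $\pi_\tau$ kills the corresponding direction, so no unboundedness is introduced.
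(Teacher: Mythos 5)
Your proof is correct and takes essentially the same route as the paper's: both verify $\wbeta\Mt=\beta\wbeta$ by a direct column-by-column computation, using that splitting only moves $a$-counts to $b$-counts (so the new row $b$ exactly compensates the correction in row $a$ because $(\wbeta)_a=(\wbeta)_b=v_a$) and that the columns of $\Mt$ indexed by $a$ and $b$ coincide. Your additional discussion of the spectrum ($\chi_\tau(x)=x\,\chi_\sigma(x)$, dominance of $\beta$), primitivity, and boundedness of the projected prefixes goes beyond the paper's proof, which stops at the eigenvector identity and declares well-definedness; that extra material is sound except for one caveat (also glossed over by the paper's own surrounding text): primitivity actually requires $I\subsetneq\occ(\sigma,a)$, since if \emph{every} occurrence of $a$ is relabelled then $a$ occurs in no $\tau(i)$, the corresponding row of $\Mt$ vanishes, and $\tau$ is not primitive.
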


\begin{proof}
By definition of symbol splittings we have
\begin{itemize}
\item $(\Mt)_{i,j} = (\Ms)_{i,j}$ for all $i \notin \{a, n+1\}$ and $j \neq n+1$,
\item $(\Mt)_{n+1,j} = (\Ms)_{a,j}$ for all $j \notin \{a, n+1\}$,
\item $(\Mt)_{i,a} + (\Mt)_{i,n+1} = (\Ms)_{i,a}$ for all $i\neq n+1$.
\end{itemize}
Hence, by definition of $\wbeta$ we have $(\wbeta \Mt)_i = (\vbeta \Ms)_i$ if $i \neq n+1$
and $(\wbeta \Mt)_{n+1} = (\vbeta \Ms)_a$,
so $\wbeta \Mt = \beta \wbeta$,
which proves the lemma.
\end{proof}

\begin{lemm}
\label{lemm:split_lemm2}
Under the hypotheses of Proposition~\ref{prop:split},
let $i\in\{1,\ldots,n,b=n+1\}$ and $(j;k) \in \occ(\tau,i)$,
and let $i' = a$ if $i = b$ and $i' = i$ otherwise.
We have
$\Tt(i,j;k) = \Ts(i',j;k)$ if $j \notin \{a,b\}$,
and
$\Tt(i,a;k) \cup \Tt(i,b;k) = \Ts(i',a;k)$.
\end{lemm}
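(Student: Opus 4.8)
The plan is to compare the representation spaces of $\sigma$ and $\tau$, reduce the claimed subsubtile identities to identities between \emph{subtiles}, and establish the latter by a periodic‑point argument. First I would record two structural facts. Since $\chi_\tau(x) = x\,\chi_\sigma(x)$, the nonzero eigenvalues of $\Mt$ coincide with those of $\Ms$; in particular $\beta$ and its conjugates $\beta_1,\ldots,\beta_{r+s}$ are the same for both substitutions, so the contractions agree, $\bfh_\tau = \bfh_\sigma$. Next, let $\psi$ be the letter‑to‑letter map folding the new symbol back, $\psi(b)=a$ and $\psi(c)=c$ for $c\in\mcA$, and let $\phi:\bbR^{n+1}\to\bbR^n$ be its abelianization, so that $\phi\,\bfP(v)=\bfP(\psi(v))$ for every word $v$. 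Comparing $\wbeta=(v_1,\ldots,v_n,v_a)$ with $\vbeta$ and passing to Galois conjugates coordinate‑wise (as guaranteed by Lemma~\ref{lemm:split_lemm1}) shows $\pi_\tau(\bfe_c)=\pi_\sigma(\bfe_c)$ for $c\le n$ and $\pi_\tau(\bfe_b)=\pi_\sigma(\bfe_a)$, that is $\pi_\tau=\pi_\sigma\circ\phi$, and hence $\pi_\tau\bfP(v)=\pi_\sigma\bfP(\psi(v))$ for every word $v$ over $\{1,\ldots,n,b\}$.

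Next I would establish the two subtile identities that the lemma secretly requires, namely $\Tt(c)=\Ts(c)$ for $c\notin\{a,b\}$ and $\Tt(a)\cup\Tt(b)=\Ts(a)$, directly from Definition~\ref{defi:rf} and independently of Proposition~\ref{prop:split}. From $\psi\tau=\sigma\psi$ (immediate from the definition of the splitting, using $\tau(b)=\tau(a)$ for the symbol $b$) it follows that if $w$ is a periodic point of $\tau$, then $u:=\psi(w)$ satisfies $\sigma^q(u)=\psi(\tau^q(w))=u$ and so is a periodic point of $\sigma$. Using $\pi_\tau\bfP(w_1\cdots w_m)=\pi_\sigma\bfP(u_1\cdots u_m)$ together with $u_{m+1}=\psi(w_{m+1})$, the defining index set $\{m:w_{m+1}=c\}$ coincides with $\{m:u_{m+1}=c\}$ when $c\notin\{a,b\}$, giving $\Tt(c)=\Ts(c)$; and the two index sets $\{m:w_{m+1}=a\}$ and $\{m:w_{m+1}=b\}$ partition $\{m:u_{m+1}=a\}$, which after taking closures (and using $\overline{A\cup B}=\overline{A}\cup\overline{B}$) yields $\Tt(a)\cup\Tt(b)=\Ts(a)$.

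With these identities in hand, the lemma follows by unwinding Definition~\ref{defi:subsubtiles}. For $j\notin\{a,b\}$, unfolding $\tau(j)$ recovers $\sigma(j)$, so the prefix translations match, $\pi_\tau\bfP(\tau(j)_1\cdots\tau(j)_{k-1})=\pi_\sigma\bfP(\sigma(j)_1\cdots\sigma(j)_{k-1})$; combined with $\bfh_\tau=\bfh_\sigma$ and $\Tt(j)=\Ts(j)$ this gives $\Tt(i,j;k)=\Ts(i',j;k)$ (a short check confirms that the occurrence $(j;k)\in\occ(\tau,i)$ transfers to $(j;k)\in\occ(\sigma,i')$). For $j\in\{a,b\}$, the equality $\tau(a)=\tau(b)$ makes the two prefix translations identical and equal to $\pi_\sigma\bfP(\sigma(a)_1\cdots\sigma(a)_{k-1})=:t$; factoring out this common translation and using $\Tt(a)\cup\Tt(b)=\Ts(a)$ gives $\Tt(i,a;k)\cup\Tt(i,b;k)=\bfh_\sigma(\Tt(a)\cup\Tt(b))+t=\bfh_\sigma(\Ts(a))+t=\Ts(i',a;k)$.

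The main obstacle is the potential circularity: the subsubtile $\Tt(i,j;k)$ is built from the subtile $\Tt(j)$, so the identity cannot be obtained by formal manipulation of Definition~\ref{defi:subsubtiles} alone and genuinely needs the subtile relations $\Tt(j)=\Ts(j)$ and $\Tt(a)\cup\Tt(b)=\Ts(a)$. Since these resemble (parts of) Proposition~\ref{prop:split}, the delicate point is to obtain them \emph{before} that proposition; the periodic‑point argument of the second step does exactly this, relying only on Definition~\ref{defi:rf}, the intertwining $\psi\tau=\sigma\psi$, and the compatibility $\pi_\tau\bfP(v)=\pi_\sigma\bfP(\psi(v))$. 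One should also confirm that the subtiles $\Ts(i)$ are independent of the chosen periodic point, so that the particular point $u=\psi(w)$ computes the same Rauzy fractal as any other choice.
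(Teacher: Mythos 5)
Your proposal is correct and follows essentially the same route as the paper's proof: both establish $\bfh_\tau=\bfh_\sigma$ and the projection compatibility $\pi_\tau\bfP(i)=\pi_\sigma\bfP(i)$ for $i\neq b$, $\pi_\tau\bfP(b)=\pi_\sigma\bfP(a)$, then prove the subtile identities $\Tt(i)=\Ts(i)$ ($i\notin\{a,b\}$) and $\Tt(a)\cup\Tt(b)=\Ts(a)$ by folding a periodic point of $\tau$ into one of $\sigma$, and finally unwind Definition~\ref{defi:subsubtiles} using $\tau(a)=\tau(b)$. Your folding map $\psi$ and the intertwining $\psi\tau=\sigma\psi$ are just a more formal packaging of the paper's letter-replacement argument, and your closing caveats (occurrence transfer, independence of the periodic point) are legitimate details the paper leaves implicit.
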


\begin{proof}
First, note that $\bfh_\sigma = \bfh_\tau$ by definition,
they are both equal to $\diag(\beta_1, \ldots, \beta_{r+s})$
(recall that $\chi_\tau(x) = x\cdot\chi_\sigma(x)$, see Section~\ref{sec:subsub}).
Also, by Lemma~\ref{lemm:split_lemm1}, for $j=1,\ldots,r+s$, $\beta_j$ is an eigenvalue
and $\bfw_{\beta_j}$ is a left eigenvector of $\Mt$ associated with $\beta_j$,
where $\bfw_{\beta_j}$ is obtained by replacing $\beta$ by $\beta_j$ in the coordinates of $\wbeta$.
It follows that $\pi_\tau\bfP(i) = \pi_\sigma\bfP(i)$ for all $i \neq b$
and $\pi_\tau\bfP(b) = \pi_\sigma\bfP(a)$.
We will use these facts later in the proof.

Next, we claim that $\Tt(i) = \Ts(i)$ if $i \notin \{a,b\}$ and $\Tt(a) \cup \Tt(b) = \Ts(a)$.
Indeed, let $u$ be a periodic point of $\tau$,
and let $u'$ be defined by $u_m' = a$ if $u_m = b$ and $u_m' = u_m$ otherwise.
Then it is easy to check that $u'$ is a periodic point of $\sigma$,
and that $\pi_\tau \bfP(u_1\cdots u_m) = \pi_\sigma \bfP(u_1' \cdots u_m')$ for all $m \geq 1$,
so our claim follows from Definition~\ref{defi:rf} of Rauzy fractals.
Finally, we have
\begin{align*}
\Tt(i,j;k)
    &= \bfh_\tau \Tt(j) + \pi_\tau \bfP(\tau(j)_1 \cdots \tau(j)_{k-1}) \\
    &= \bfh_\sigma \Ts(j) + \pi_\sigma \bfP(\sigma(j)_1 \cdots \sigma(j)_{k-1}) \\
    &= \Ts(i',j;k) \textrm{ for } j\notin\{a,b\},
\end{align*}
and
\begin{align*}
\Tt(i,a;k) \cup \Tt(i,b;k)
    &= \bfh_\tau (\Tt(a) \cup \Tt(b)) + \pi_\tau \bfP(\tau(a)_1 \cdots \tau(a)_{k-1}) \\
    &= \bfh_\tau \Ts(a) + \pi_\tau \bfP(\tau(a)_1 \cdots \tau(a)_{k-1}) \\
    &= \bfh_\sigma \Ts(a) + \pi_\sigma \bfP(\sigma(a)_1 \cdots \sigma(a)_{k-1}) \\
    &= \Ts(i',a;k),
\end{align*}
which proves the lemma.
\end{proof}

\begin{proof}[Proof of Proposition~\ref{prop:split}]
Let $(j;k) \in \occ(\tau,i)$,
and let $i' = a$ if $i = b$ and $i' = i$ otherwise.
We have
\begin{align*}
\Tt(i)
    &= \bigcup_{(j;k) \in \occ(\tau,i)} \Tt(i,j;k)
        \qquad \text{by Proposition~\ref{prop:GIFS}} \displaybreak[0]\\
    &= \bigcup_{\substack{(j;k) \in \occ(\tau,i) \\ j \notin \{a,b\}}} \Tt(i,j;k)
    \ \cup \ \bigcup_{\substack{(a;k) \in \occ(\tau,i)}} \Tt(i,a;k)
    \ \cup \ \bigcup_{\substack{(b;k) \in \occ(\tau,i) }} \Tt(i,b;k) \displaybreak[0]\\
    &= \bigcup_{\substack{(j;k) \in \occ(\tau,i) \\ j \notin \{a,b\}}} \Tt(i,j;k)
    \ \cup \ \bigcup_{\substack{(a;k) \in \occ(\tau,i) }} \Tt(i,a;k) \cup \Tt(i,b;k) \displaybreak[0]\\
    &= \bigcup_{\substack{(j;k) \in \occ(\tau,i) \\ j \notin \{a,b\}}} \Ts(i',j;k)
    \ \cup \ \bigcup_{\substack{(a;k) \in \occ(\tau,i) }} \Ts(i',a;k)
        \qquad \text{by Lemma~\ref{lemm:split_lemm2}} \displaybreak[0]\\
    &=   \bigcup_{\substack{(j;k) \in \occ(\tau,i) \\ j \neq b}} \Ts(i',j;k).
\end{align*}
The third line of the above equation follows from the second line
because $(a;k) \in \occ(\tau,i)$ if and only if $(b;k) \in \occ(\tau,i)$,
by definition of symbol splittings.
Statements~\ref{item:split1},~\ref{item:split2}, \ref{item:split3} of Proposition~\ref{prop:split}
can now be proved by combining the above equality
and the fact that the condition
``$(j;k) \in \occ(\tau,i)$ and $j \neq b$''
is equivalent to
\begin{itemize}
\item
$(j;k) \in \occ(\sigma,i)$ if $i \notin \{a,b\}$, which proves~\ref{item:split1};
\item
$(j;k) \in \occ(\sigma,a) \setminus I$ if $i = a$, which proves~\ref{item:split2};
\item
$(j;k) \in I$ if $i = b$, which proves~\ref{item:split3}.
\end{itemize}
Note that Statement~\ref{item:split1} of Proposition~\ref{prop:split}
was already established in the proof of Lemma~\ref{lemm:split_lemm2}.
\end{proof}

\section{Conjugation by free group automorphisms}
\label{sect:fgaut}

In this section we describe the action of a particular family of free group automorphisms
on the Rauzy fractal of a substitution in Proposition~\ref{prop:fgaut},
which will be used to prove our main result, Theorem~\ref{theo:countablefg}.

A \tdef{free group morphism} on the alphabet $\mcA$
is a non-erasing morphism of the free group generated by $\mcA$,
consisting of the finite words made of symbols $a$ and $a^{-1}$ for $a \in \mcA$.
Substitutions can be seen as a particular case of free group automorphisms,
where no ``$\phantom{}^{-1}$'' appears in the image of each letter.
The \tdef{inverse} of a free group automorphism $\rho$
is the unique morphism (denoted by $\rho^{-1}$) such that
$\rho \rho^{-1} = \rho^{-1} \rho$ is the identity.
For example, the inverse of $\rho : 1 \mapsto 1, 2 \mapsto 211$,
is $\rho^{-1} : 1 \mapsto 1, 2 \mapsto 21^{-1}1^{-1}$.

The fundamental operation we will perform on a substitution $\sigma$ is
\tdef{conjugation by a free group automorphism $\rho$},
\emph{i.e.}, forming the product $\rho^{-1} \sigma \rho$ where $\rho$ is an automorphism.
In the specific cases that we will consider,
$\sigma$ and $\rho^{-1} \sigma \rho$ will always both be substitutions (\emph{i.e.}, contain no ``$\phantom{}^{-1}$'').
We will use a particular family of free group automorphisms,
consisting of the mappings~$\rho_{ij}$ given by (together with their inverses)
\[
\rho_{ij}(k) =
\begin{cases}
ij & \text{ if } k = j \\
k & \text{ if } k \neq j,
\end{cases}
\qquad
\rho_{ij}^{-1}(k) =
\begin{cases}
i^{-1}j & \text{ if } k = j \\
k & \text{ if } k \neq j.
\end{cases}
\]
The next proposition describes how a Rauzy fractal
is affected when its associated substitution is conjugated by a free group automorphism $\rho_{ij}$.
Example~\ref{exam:tribo_fgaut} and Example~\ref{exam:tribo_fgaut2}
provide examples of conjugacy of free groups,
and their actions on Rauzy fractals when combined with symbol splittings.

\begin{prop}
\label{prop:fgaut}
Let
\begin{itemize}
\item
$\tau$ be a primitive unimodular Pisot substitution on the alphabet $\mcA$,
\item
$b \in \mcA$ be such that there exists a unique $c \in \mcA$
such that for every $(j;k) \in \occ(\tau,b)$,
we have $k \geq 2$ and $\tau(j)_{k-1} = c$,
\item
$\theta = \rho_{cb}^{-1} \tau \rho_{cb}^{\phantom{-}}$,
\item
$\wbeta \in \bbR^{n+1}$ be a left eigenvector of $\Mt$ associated with $\beta$,
\item
$\zbeta = \wbeta\bfM_{\rho_{cb}} \in \bbR^{n+1}$
    (which is a left eigenvector of $\Mth$ associated with $\beta$),
\item
$\Tt$ be the Rauzy fractal of $\tau$ (with respect to eigenvector $\wbeta$),
\item
$\Tth$ be the Rauzy fractal of $\theta$ (with respect to eigenvector $\zbeta$).
\end{itemize}
We have
\begin{enumerate}
\item
\label{item:fgaut1}
$\Tth(i) = \Tt(i)$ if $i \notin \{b,c\}$,
\item
\label{item:fgaut2}
$\Tth(b) \cup \Tth(c) = \Tt(c)$.
\end{enumerate}
More precisely,
\begin{enumerate}[start=3]
\item
\label{item:fgaut3}
$\Tth(b) = \displaystyle\bigcup_{(j;k) \in \occ(\tau,b)} \Tt(c,j;k-1)=\Tt(b)-\pi_\tau \bfP(c)$,
\item
\label{item:fgaut4}
$\Tth(c) = \displaystyle\bigcup_{\substack{(j;k) \in \occ(\tau,c) \\ (j;k+1) \notin \occ(\tau,b)}} \Tt(c,j;k).$
\end{enumerate}
In particular, $\Tth = \bigcup_{i \neq b} \Tt(i)$.
\end{prop}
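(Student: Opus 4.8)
The plan is to mimic the structure of the proof of Proposition~\ref{prop:split}: first establish how the eigenvector data and the projection $\pi$ transform under the conjugation, then derive a subsubtile-level identity analogous to Lemma~\ref{lemm:split_lemm2}, and finally assemble the subtile decompositions using the graph-directed IFS of Proposition~\ref{prop:GIFS}. Throughout, the key combinatorial fact I would exploit is the hypothesis that every occurrence of $b$ in $\tau$ is immediately preceded by the fixed symbol $c$; this is precisely what makes the conjugation $\theta = \rho_{cb}^{-1}\tau\rho_{cb}$ a genuine substitution (no ``$\phantom{}^{-1}$'' survives) and what will let me relate occurrences in $\theta$ to occurrences in $\tau$.

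First I would record the linear-algebra preliminaries. Since $\zbeta = \wbeta\bfM_{\rho_{cb}}$ and $\rho_{cb}$ merely prepends $c$ to the image of $b$, one checks that $\zbeta$ differs from $\wbeta$ only in possibly the $c$- and $b$-coordinates, and using $\wbeta\Mt = \beta\wbeta$ together with the identity $\bfM_\theta \bfM_{\rho_{cb}}^{-1} = \bfM_{\rho_{cb}}^{-1}\Mt$ coming from $\theta\rho_{cb}^{-1} = \rho_{cb}^{-1}\tau$, one verifies $\zbeta\bfM_\theta = \beta\zbeta$. Applying the same substitution $\beta\mapsto\beta_j$ for $1\le j\le r+s$ gives $\bfh_\theta = \bfh_\tau$ and the projection identities $\pi_\theta\bfP(k) = \pi_\tau\bfP(k)$ for $k\ne b$ and $\pi_\theta\bfP(b) = \pi_\tau\bfP(b) - \pi_\tau\bfP(c)$, the latter reflecting $\rho_{cb}(b) = cb$. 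These relations are the engine for everything downstream.

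Next I would analyze how a periodic point and its Abelianized prefixes transform. If $u$ is a periodic point of $\theta$, then $\rho_{cb}(u)$ (suitably interpreted) relates to a periodic point of $\tau$, and the crucial point is that the conjugation only rewrites each isolated $b$ as the pair $cb$ while the preceding $c$ in $\tau$ disappears—so the prefix projections match up to the translation by $\pi_\tau\bfP(c)$ identified above. This yields the displacement formula $\Tth(b) = \Tt(b) - \pi_\tau\bfP(c)$ of item~\ref{item:fgaut3} directly from Definition~\ref{defi:rf}. For the subsubtile description, I would compute $\Tth(i,j;k) = \bfh_\theta\Tth(j) + \pi_\theta\bfP(\theta(j)_1\cdots\theta(j)_{k-1})$ and, using $\bfh_\theta = \bfh_\tau$ and the projection identities, match each occurrence $(j;k)\in\occ(\theta,i)$ with an occurrence in $\tau$; the preceding-$c$ hypothesis guarantees that occurrences of $b$ in $\theta$ correspond bijectively to occurrences $(j;k)$ of $b$ in $\tau$ shifted back to the position $k-1$ where $c$ sat, giving the first equality in item~\ref{item:fgaut3} and, by the complementary bookkeeping, item~\ref{item:fgaut4}. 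Items~\ref{item:fgaut1} and~\ref{item:fgaut2} then follow by collecting these pieces through Proposition~\ref{prop:GIFS}, and the final ``in particular'' statement $\Tth = \bigcup_{i\ne b}\Tt(i)$ drops out by combining \ref{item:fgaut1}, \ref{item:fgaut2}, and the translate in \ref{item:fgaut3}.

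The main obstacle I anticipate is the careful occurrence-level bookkeeping in the step linking $\occ(\theta,\cdot)$ to $\occ(\tau,\cdot)$. Because $\rho_{cb}^{-1}$ inserts $c^{-1}$, one must verify that in forming $\theta(j) = \rho_{cb}^{-1}\tau\rho_{cb}(j)$ every cancellation is total—that each inserted $c^{-1}$ meets a genuine preceding $c$—which is exactly where the hypothesis ``$\tau(j)_{k-1}=c$ for all $(j;k)\in\occ(\tau,b)$'' is indispensable and must be invoked with precision. Getting the index shift $k\mapsto k-1$ and the translation $\pi_\tau\bfP(c)$ to be mutually consistent, rather than off by one or double-counted, is the delicate part; once that correspondence is pinned down, the rest is the same kind of measure-disjoint union manipulation already carried out for Proposition~\ref{prop:split}.
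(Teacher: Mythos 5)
Your overall architecture mirrors the paper's proof (eigenvector identity for $\zbeta$, periodic-point correspondence under $\rho_{cb}$, then GIFS bookkeeping for the subsubtile statements), but the central computational identity you build everything on has the wrong sign, and the argument as written does not survive it. From $\zbeta=\wbeta\bfM_{\rho_{cb}}$ and the fact that the $b$-th column of $\bfM_{\rho_{cb}}$ is $\bfP(\rho_{cb}(b))=\bfP(cb)=\bfe_c+\bfe_b$ (all other columns being standard basis vectors), one gets $\langle\zbeta,\bfe_b\rangle=\langle\wbeta,\bfe_c\rangle+\langle\wbeta,\bfe_b\rangle$, hence
\[
\pi_\theta\bfP(b)=\pi_\tau\bfP(c)+\pi_\tau\bfP(b),
\]
with a plus sign, not $\pi_\tau\bfP(b)-\pi_\tau\bfP(c)$ as you claim; indeed the letter $b$ of $\theta$ stands for the block $cb$ of $\tau$, so ``reflecting $\rho_{cb}(b)=cb$'' forces the sum. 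This sign is not cosmetic. Its whole point is that for a periodic point $u$ of $\theta$ and $v=\rho_{cb}(u)$, with $\ell(m)$ the position in $v$ corresponding to position $m$ in $u$, each letter $u_k=b$ contributes $\pi_\theta\bfP(b)=\pi_\tau\bfP(v_{\ell(k)-1}v_{\ell(k)})$, so that the prefix projections agree \emph{exactly}: $\pi_\theta\bfP(u_1\cdots u_m)=\pi_\tau\bfP(v_1\cdots v_{\ell(m)})$. They do not agree ``up to the translation by $\pi_\tau\bfP(c)$'' as you assert; with your sign the discrepancy is $-2\,\pi_\tau\bfP(c)$ times the number of occurrences of $b$ in $u_1\cdots u_m$, which is unbounded, and the correspondence on which your derivation of Statements \ref{item:fgaut1}--\ref{item:fgaut4} rests collapses. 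The translation in Statement~\ref{item:fgaut3} has a different origin: when $u_{m+1}=b$ one has $v_{\ell(m)+1}=c$ and $v_{\ell(m)+2}=b$, so the prefixes defining $\Tt(b)$ carry the extra letter $c$ that is absent from the prefixes defining $\Tth(b)$; this comparison of prefixes, not the projection identity, gives $\Tt(b)=\Tth(b)+\pi_\tau\bfP(c)$.

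There is also a circularity in your order of deduction. You propose to establish the subsubtile identities (Statements \ref{item:fgaut3} and \ref{item:fgaut4}) by expanding $\Tth(i,j;k)=\bfh_\theta\Tth(j)+\pi_\theta\bfP(\theta(j)_1\cdots\theta(j)_{k-1})$ and matching occurrences, and only afterwards to obtain Statements \ref{item:fgaut1} and \ref{item:fgaut2} ``by collecting these pieces through Proposition~\ref{prop:GIFS}''. But rewriting $\bfh_\theta\Tth(j)$ in terms of tiles of $\tau$ already requires knowing how $\Tth(j)$ relates to $\Tt(j)$---that is, it requires Statements \ref{item:fgaut1} and \ref{item:fgaut2} together with the translation formula: this is exactly how the paper handles the terms with $j\in\{b,c\}$ in the union, combining $\bfh_\theta(\Tth(b)\cup\Tth(c))=\bfh_\tau\Tt(c)$ and $\Tth(b)+\pi_\theta\bfP(c)=\Tt(b)$. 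So Statements \ref{item:fgaut1}, \ref{item:fgaut2} and the displacement formula must be proven \emph{first}, directly from the periodic-point correspondence, by checking that $\ell$ induces bijections between $\{m:u_{m+1}=i\}$ and $\{m:v_{m+1}=i\}$ for $i\notin\{b,c\}$, between $\{m:u_{m+1}\in\{b,c\}\}$ and $\{m:v_{m+1}=c\}$, and (via $m\mapsto\ell(m)+1$) between $\{m:u_{m+1}=b\}$ and $\{m:v_{m+1}=b\}$. Your sketch already contains the necessary prefix machinery, so the repair is to fix the sign and invert the order: prove \ref{item:fgaut1}, \ref{item:fgaut2} and the second equality of \ref{item:fgaut3} from the prefix correspondence, and only then run the GIFS computation for the first equality of \ref{item:fgaut3} and for \ref{item:fgaut4}.
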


\begin{proof}
We first check that $\zbeta$ is a left eigenvector of $\Mth$ associated with $\beta$:
\[
\zbeta\Mth=\wbeta \bfM_{\rho_{cb}} \Mth
    = \wbeta \bfM_{\rho_{cb}} \bfM_{\rho_{cb}}^{-1} \Mt \bfM_{\rho_{cb}}
    = \wbeta \Mt \bfM_{\rho_{cb}}
    = \beta \wbeta \bfM_{\rho_{cb}}=\beta\zbeta.
\]
Hence, the Rauzy fractal $\Tth$ mentioned in the statement of the Proposition is well-defined.
We now prove Statements~\ref{item:fgaut1} and~\ref{item:fgaut2}.
Let $u$ be a periodic point of $\theta$
(\emph{i.e.}, there exists $k \geq 1$ such that $\theta^k(u) = u$),
and let $v = \rho_{cb}(u)$.
It is easy to check that $v$ is a periodic point of $\tau$:
\[
\tau^k(\rho_{cb}^{\phantom{-}}(u))
    = (\rho_{cb}^{\phantom{-}} \theta \rho_{cb}^{-1})^k\rho_{cb}^{\phantom{-}}(u)
    = \rho_{cb}^{\phantom{-}} \theta^k \rho_{cb}^{-1}\rho_{cb}^{\phantom{-}}(u)
    = \rho_{cb}^{\phantom{-}}\theta^k(u)
    = \rho_{cb}^{\phantom{-}}(u).
\]
Let $\ell : \bbN \rightarrow \bbN$ be the unique function defined by induction as follows for $m \geq 2$:
\[
\ell(1) =
\begin{cases}
1 \textrm{ if } u_1 \neq b \\
2 \textrm{ if } u_1 = b,
\end{cases}
\qquad
\ell(m) =
\begin{cases}
\ell(m-1) + 1 \textrm{ if } u_m \neq b \\
\ell(m-1) + 2 \textrm{ if } u_m = b.
\end{cases}
\]
In particular, we have $u_m = v_{\ell(m)}$ for all $m \in \bbN$.
The definition of $\ell$ is illustrated below, on an example
where $u_3 = v_4 = b$, $v_3 = c$ and $u_1,u_2,u_4,v_1,v_2,v_5 \notin \{b,c\}$.
\[
\begin{array}{rcccccccc}
               u & = & u_1 & u_2 &        & u_3=b & u_4 & \cdots \\
\rho_{cb}(u) = v & = & v_1 & v_2 & v_3 =c & v_4=b & v_5 & \cdots \\
                 & = & v_{\ell(1)} & v_{\ell(2)} & v_{\ell(2)+1} & v_{\ell(3)} & v_{\ell(4)} & \cdots
\end{array}
\]

The equality $\zbeta = \wbeta \bfM_{\rho_{cb}}$
implies that $\langle \zbeta, \bfe_i \rangle = \langle \wbeta, \bfe_i \rangle$ if $i \neq b$
and $\langle \zbeta, \bfe_b \rangle = \langle \wbeta, \bfe_c \rangle + \langle \wbeta, \bfe_b \rangle$.
Hence, by definition of $\pi_\tau$ and $\pi_\theta$
we have $\pi_\theta\bfP(i) = \pi_\tau\bfP(i)$ if $i \neq b$
and $\pi_\theta\bfP(b) = \pi_\tau\bfP(c) + \pi_\tau\bfP(b)$.
It follows that for all $m \in \bbN$,
\begin{align*}
\pi_\theta \bfP(u_1 \cdots u_m)
    &= \sum_{\substack{1 \leq k \leq m \\ u_k \neq b}} \pi_\theta\bfP(u_k)
     + \sum_{\substack{1 \leq k \leq m \\ u_k = b}}  \pi_\theta\bfP(b) \\
    &= \sum_{\substack{1 \leq k \leq m \\ u_k \neq b}} \pi_\tau\bfP(v_{\ell(k)})
     + \sum_{\substack{1 \leq k \leq m \\ u_k = b}} (\pi_\tau\bfP(c) + \pi_\tau\bfP(b)) \\
    &= \sum_{\substack{1 \leq k \leq m \\ u_k \neq b}} \pi_\tau\bfP(v_{\ell(k)})
     + \sum_{\substack{1 \leq k \leq m \\ u_k = b}} \pi_\tau\bfP(v_{\ell(k)-1}v_{\ell(k)}) \\
    &=\pi_\tau \bfP(v_{\ell(1)} \cdots v_{\ell(m)}).
\end{align*}
It is easy to verify that for all $i\notin\{b,c\}$,
$\ell$ is a bijection between $\{m\in\bbN:u_{m+1}=i\}$ and $\{m\in\bbN:v_{m+1}=i\}$,
so
\begin{align*}
\Tth(i) &=\overline{ \{\pi_\theta\bfP(u_1 \cdots u_m) : u_{m+1} = i\}} \\
        &=\overline{ \{\pi_\tau\bfP(v_1 \cdots v_{\ell(m)}) : v_{\ell(m)+1} = i\}} \\
        &= \Tt(i).
\end{align*}
Moreover, $\ell$ is also a bijection between $\{m\in\bbN:u_{m+1}=b\textrm{ or }c\}$ and $\{m\in\bbN:v_{m+1}=c\}$,
so
\begin{align*}
\Tth(b) \cup \Tth(c)
       &=\overline{ \{\pi_\theta\bfP(u_1 \cdots u_m) : u_{m+1} = b \text{ or } c\}}\\
       &= \overline{\{\pi_\tau\bfP(v_1 \cdots v_{\ell(m)}) : v_{\ell(m)+1} = c\}} \\
       &= \Tt(c).
\end{align*}
Therefore, Statements~\ref{item:fgaut1} and~\ref{item:fgaut2} are proved.
For the second equality of Statement~\ref{item:fgaut3}, we compute
\begin{align*}
\Tth(b) +\pi_\theta \bfP(c)
       &=\overline{ \{\pi_\theta\bfP(u_1 \cdots u_m)+\pi_\theta \bfP (c) : u_{m+1} = b \}}\\
       &= \overline{\{\pi_\tau\bfP(v_1 \cdots v_{\ell(m)}c) : v_{\ell(m)+2} = b\}} \\
       &= \Tt(b).
\end{align*}
We used here that $m\mapsto \ell(m)+1$ is a bijection between the sets $\{m\in\bbN:u_{m+1}=b\}$ and $\{m\in\bbN:v_{m+1}=b\}$.
We now prove the first equality of Statement~\ref{item:fgaut3}
(giving a precise description of the subsubtile decomposition of $\Tth(b)$).
Note that
\[
\theta(i) =
\begin{cases}
\text{the word $\tau(i)$ in which each occurrence of $cb$ is replaced by $b$, if $i\ne b$} \\
\text{the word $\tau(cb)=\tau(c)\tau(b)$ in which each occurrence of $cb$ is replaced by $b$}, \text{ if } i = b.
\end{cases}
\]
We apply Proposition~\ref{prop:GIFS}, and use the fact that $\bfh_\theta=\bfh_\tau$ as well as the above correspondence between the occurrences of $b$ in $\theta$ and in $\tau$:
\begin{align}
\Tth(b) &= \bigcup_{(j;k)\in\occ(\theta,b)}
    \bfh_\theta\Tth(j)+\pi_\theta\bfP(\theta(j)_1\cdots\theta(j)_{k-1}) \nonumber \\
&= \bigcup_{(j;k')\in\occ(\tau,b),j\ne b,c}
    \bfh_\tau\Tt(j)+\pi_\tau\bfP(\tau(j)_1\cdots\tau(j)_{k'-2}) \nonumber \\
&\phantom{=}~ \cup\bigcup_{(c;k')\in\occ(\tau,b)}
    \bfh_\theta\Tth(c)+\pi_\tau\bfP(\tau(c)_1\cdots\tau(c)_{k'-2})\label{eq1Tb}\tag{$*$} \\
&\phantom{=}~ \cup\bigcup_{(c;k')\in\occ(\tau,b)}
    \bfh_\theta\Tth(b)+\pi_\tau\bfP(\tau(c)_1\cdots\tau(c)_{k'-2})\label{eq2Tb}\tag{$*${}$*$} \\
&\phantom{=}~ \cup\bigcup_{(b;k')\in\occ(\tau,b)}
    \bfh_\theta\Tth(b)+\pi_\tau\bfP(\tau(c)\tau(b)_1\cdots\tau(b)_{k'-2}).\label{eq3Tb}\tag{$*${}$*${}$*$}
\end{align}

Recall that $\Tth(b) \cup \Tth(c)=\Tt(c)$,
hence, $\bfh_\theta(\Tth(b) \cup \Tth(c))=\bfh_\tau\Tt(c)$,
which allows us to combine \eqref{eq1Tb} and \eqref{eq2Tb} into a single union.
Since
$\pi_\tau\bfP(\tau(c))=\bfh_\tau\pi_\tau\bfP(c)=\bfh_\theta\pi_\theta\bfP(c)$
and $\Tth(b) +\pi_\theta \bfP(c)= \Tt(b)$,
we can write in \eqref{eq3Tb} that
\[
\bfh_\theta\Tth(b)+\pi_\tau\bfP(\tau(c)\tau(b)_1\cdots\tau(b)_{k'-2})=\bfh_\tau\Tt(b)+\pi_\tau\bfP(\tau(b)_1\cdots\tau(b)_{k'-2}).
\]
Therefore, Statement~\ref{item:fgaut3} follows from
\[
\Tth(b) = \bigcup_{(j;k')\in\occ(\tau,b)}\bfh_\tau\Tt(j)+\pi_\tau\bfP(\tau(j)_1\cdots\tau(j)_{k'-2})
=\bigcup_{(j;k') \in \occ(\tau,b)} \Tt(c,j;k'-1).
\]
Statement~\ref{item:fgaut4} can be proved via a similar computation for $\Tth(c)$.
\end{proof}

\begin{exam}
\label{exam:tribo_fgaut}
Let $\sigma : 1 \mapsto 21, 2 \mapsto 31, 3 \mapsto 1$.
First we split $\sigma^3 : 1 \mapsto 1213121, 2 \mapsto 213121, 3 \mapsto 3121$
from $a = 1$ to $b = 4$ with occurrences $I = \{(1;7)\}$
to obtain $\tau : 1 \mapsto 1213124, 2 \mapsto 213121, 3 \mapsto 3121, 4 \mapsto 1213124$.
Then we conjugate $\tau$ with $\rho_{24} : 1 \mapsto 1, 2 \mapsto 2, 3 \mapsto 3, 4 \mapsto 24$:
\[
\rho_{24}^{-1} \tau \rho_{24}^{\phantom{-}}:
\left\{
\begin{array}{lclclcl}
1 & \mapsto & 1   & \mapsto & 1213124       & \mapsto   & 121314      \\
2 & \mapsto & 2   & \mapsto & 213121        & \mapsto   & 213121      \\
3 & \mapsto & 3   & \mapsto & 3121          & \mapsto   & 3121        \\
4 & \mapsto & 24  & \mapsto & 2131211213124 & \mapsto   & 213121121314
\end{array}
\right.
\]
The effect of these operations on the Rauzy fractals are shown in Figure~\ref{fig:strategy}:
the subtiles of $\Ts$ are shown in \subref{fig:strategy-a},
the subsubtiles of $\mcT_{\sigma^3}$ are shown in \subref{fig:strategy-b},
the subtiles of $\Tt$ are shown in \subref{fig:strategy-c}
and the subtiles of $\Tth$ are shown in \subref{fig:strategy-d}.
\end{exam}

\begin{exam}
\label{exam:tribo_fgaut2}
Let $\sigma : 1 \mapsto 21, 2 \mapsto 31, 3 \mapsto 1$.
Let $\tau$ be the splitting of $\sigma^6$ from $1$ to $4$ with occurrences
$I = \{(1;24);(1;31);(1;33);(1;40)\}$.
(Note that $\sigma(1)_{p-1} = 2$ for all $(1,p) \in I$.)
Let $\theta = \rho_{24}^{-1} \tau \rho_{24}$ with
$\rho_{24} : 1 \mapsto 1, 2 \mapsto 2, 3 \mapsto 3, 4 \mapsto 24$.
The effect of these operations on the Rauzy fractal are shown in Figure~\ref{fig:fgaut2}.
\begin{figure}[ht]
\centering
\includegraphics[height=35mm]{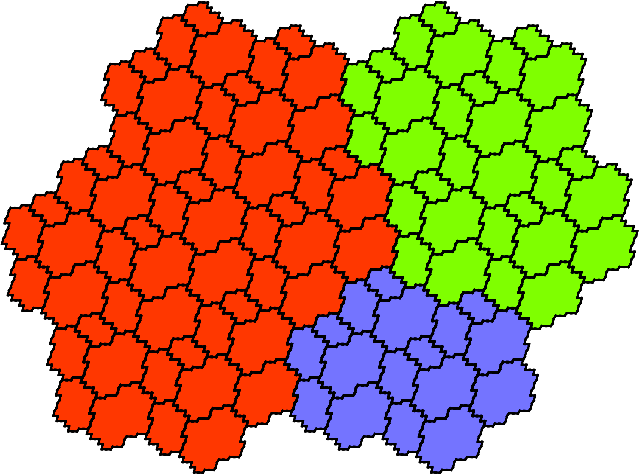} \hfil
\includegraphics[height=35mm]{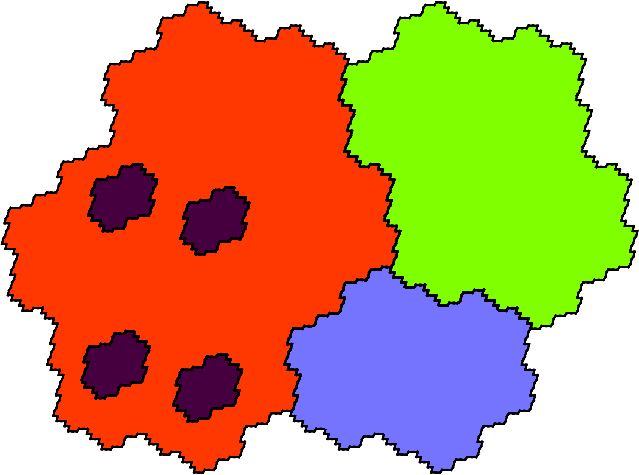} \hfil
\includegraphics[height=35mm]{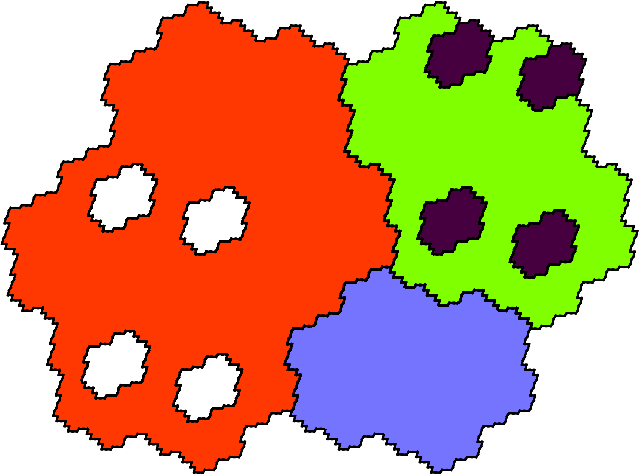} \hfil
\caption{
    Rauzy fractals of the substitutions defined in Example~\ref{exam:tribo_fgaut2}.
    The subsubtiles of $\mcT_{\sigma^6}$ (left),
    the subtiles of $\Tt$ (center),
    and the subtiles of $\Tth$ (right).
}
\label{fig:fgaut2}
\end{figure}
\end{exam}

\section{Main results}
\label{sect:mainresults}

We now combine the results of Section~\ref{sect:split} (symbol splittings)
and Section~\ref{sect:fgaut} (conjugations by free group automorphisms)
in order to prove our main result, Theorem~\ref{theo:countablefg}.
First, we prove in Proposition~\ref{prop:itersubtiles}
that it is possible to ``dig holes'' in a ``nice'' planar Rauzy fractal by extracting some subsubtiles.

\begin{prop}
\label{prop:itersubtiles}
Let $\sigma$ be a primitive unimodular Pisot substitution on the alphabet $\mcA$
with dominant Pisot eigenvalue of degree $3$,
such that $\Ts$ and its subtiles $\Ts(i)$ ($i\in\mcA$) are homeomorphic to a closed disc.
Let $K\geq 1$.
Then there exist two letters $a\ne c\in\mcA$ and $N \geq 1$ and
\[
I \subset \underbrace{\{(j;k) \in \occ(\sigma^N,a) : \sigma^N(j)_{k-1} = c\}}_{\displaystyle=:O_N}\subset\occ(\sigma^N,a)
\]
such that the sets
\[
\overline{\Ts(a) \setminus \bigcup_{(j;k) \in I} \mcT_{\sigma^N}(a,j;k)}
    = \bigcup_{(j;k) \in \occ(\sigma^N,a)\setminus I} \mcT_{\sigma^N}(a,j;k)
\]
and
\[
\overline{\Ts \setminus \bigcup_{(j;k) \in  I} \mcT_{\sigma^N}(a,j;k)}
\]
are homeomorphic to a closed disc minus the union of $K$ disjoint open discs of its interior.
Here, $\overline{M}$ denotes the closure of a set $M$.
\end{prop}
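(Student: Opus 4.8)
The plan is to reduce the statement to a standard planar-topology fact and then feed it the combinatorial–geometric input it needs. The topological fact I would use is elementary: if $D\subset\bbR^2$ is homeomorphic to a closed disc and $E_1,\dots,E_K\subset\operatorname{int}(D)$ are \emph{pairwise disjoint} closed topological discs, then $\overline{D\setminus(E_1\cup\cdots\cup E_K)}$ is homeomorphic to a closed disc minus $K$ disjoint open discs. Each subsubtile $\mcT_{\sigma^N}(a,j;k)=\bfh_\sigma^N\Ts(j)+\pi_\sigma\bfP(\cdots)$ is the image of the disc $\Ts(j)$ under an invertible affine map, hence is itself a closed topological disc, so it suffices to produce, for a suitable pair $a\ne c$ and some $N$, a set $I\subset O_N$ with $|I|=K$ whose subsubtiles are pairwise disjoint and contained in $\operatorname{int}\Ts(a)$. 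Two remarks then close the argument: since $\operatorname{int}\Ts(a)\subset\operatorname{int}\Ts$, the same discs are interior to $\Ts$, so \emph{both} complements are discs with $K$ holes; and the set equality $\overline{\Ts(a)\setminus\bigcup_{(j;k)\in I}\mcT_{\sigma^N}(a,j;k)}=\bigcup_{(j;k)\in\occ(\sigma^N,a)\setminus I}\mcT_{\sigma^N}(a,j;k)$ follows from Proposition~\ref{prop:GIFS} together with the fact that subsubtiles are the closures of their interiors and meet pairwise in Lebesgue-null (hence interior-disjoint) sets.

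For the letters I would choose $a\ne c$ so that $ca$ is a factor of the language of $\sigma$. Such a pair exists: the alphabet has at least $d=3$ symbols, the subshift of a primitive substitution is minimal and infinite (so its fixed point is not eventually constant), whence some length-two factor $xy$ with $x\ne y$ occurs; set $c=x$, $a=y$. By minimality the factor $ca$ is syndetic, i.e. it occurs in every sufficiently long factor with bounded gaps.

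The geometric heart is to obtain $K$ pairwise disjoint interior subsubtiles preceded by $c$, and here I would combine three estimates, all valid for $N$ large. First, \emph{smallness}: $\bfh_\sigma^N=\diag(\beta_1,\dots,\beta_{r+s})^N$ is contracting, so $\operatorname{diam}\mcT_{\sigma^N}(a,j;k)\leq C\rho^N$ with $\rho=\max_i|\beta_i|<1$. Second, \emph{mass}: since $ca$ is syndetic and $|\sigma^N(j)|\gtrsim\beta^N$, the set $O_N$ has $\gtrsim\beta^N$ elements, and as each subsubtile has Lebesgue measure $\beta^{-N}\operatorname{meas}\Ts(j)$ (unimodularity gives Jacobian $\beta^{-N}$), the union $\bigcup_{(j;k)\in O_N}\mcT_{\sigma^N}(a,j;k)$ has measure bounded below by a positive constant independent of $N$. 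Third, \emph{interiority}: it is well known that $\partial\Ts(a)$ is Lebesgue-null, so the $c$-preceded subsubtiles touching the boundary lie in a $C\rho^N$-neighbourhood of a null set and their total measure tends to $0$; hence a definite amount of mass is carried by $c$-preceded subsubtiles lying entirely in $\operatorname{int}\Ts(a)$. A packing argument then extracts arbitrarily many pairwise disjoint such subsubtiles: they are measure-disjoint with individual measure $\sim\beta^{-N}$ and diameter $\leq C\rho^N$, so a ball of radius $\sim\rho^N$ meets only a controlled number of them, and a greedy selection among $\gtrsim\beta^N$ candidates yields a family of size $\to\infty$ whose members are pairwise at positive distance. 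Taking $N$ large and retaining $K$ of them defines $I$.

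The main obstacle is the last step: converting the mere measure-disjointness of subsubtiles into genuine disjointness of the chosen family — so that removing them creates $K$ separate holes rather than merged ones — while keeping them off the boundary. This is exactly where the smallness of $\operatorname{diam}\mcT_{\sigma^N}$ against the positive mass carried by the $c$-preceded subsubtiles is decisive, and where the null-boundary property of Rauzy subtiles enters. The totally real case (two real conjugates $\beta_1,\beta_2$) needs slightly more care than the complex case, since the contraction is then anisotropic and subsubtiles may be elongated; but the comparison $\operatorname{meas}\gtrsim\beta^{-N}$ versus $\operatorname{diam}\lesssim\rho^N$ still forces the number of extractable disjoint interior subsubtiles to grow like $\rho^{-2N}\to\infty$, which is all that is required.
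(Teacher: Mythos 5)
Your proposal is correct, and it reaches the conclusion by a genuinely different route than the paper. Both proofs share the same endgame: produce $K$ pairwise disjoint closed topological discs (subsubtiles of $\sigma^N$) lying in $\operatorname{int}\Ts(a)$ and coming from $c$-preceded occurrences, then invoke Sch\"onflies, and use measure-disjointness (Proposition~\ref{prop:GIFS}) plus the closure-of-interior property to get the displayed set equality. The difference is how the $K$ disjoint subsubtiles are found. The paper \emph{localizes}: it fixes a single $c$-preceded occurrence $(j_0;k_0)$ of $a$ in some $\sigma^{n_0}$, places $K$ disjoint closed balls around interior points of $\mcT_{\sigma^{n_0}}(a,j_0;k_0)$, takes $N$ so large that every subsubtile of $\sigma^N$ has diameter less than half the common radius, and selects for each ball a subsubtile containing its center; disjointness is inherited from the balls, and the inclusion $I\subset O_N$ comes for free from a hierarchical factorization argument (a subsubtile of $\sigma^N$ contained in $\mcT_{\sigma^{n_0}}(a,j_0;k_0)$ corresponds to an occurrence passing through $(j_0;k_0)$, hence preceded by $c$). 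You instead argue \emph{globally}, by counting: syndeticity of the factor $ca$ gives $\gtrsim\beta^N$ occurrences in $O_N$, unimodularity gives each subsubtile measure comparable to $\beta^{-N}$, so the $O_N$-subsubtiles carry mass bounded below uniformly in $N$; the null-boundary property of $\Ts(a)$ lets you discard those meeting the boundary; and a mass-based greedy packing extracts $\gtrsim\rho^{-2N}$ pairwise disjoint interior ones. What each buys: the paper's proof is softer and self-contained --- it needs neither the fact that $\partial\Ts(a)$ is Lebesgue-null (which does \emph{not} follow from the disc hypothesis, think of Osgood curves, so you must cite it from the literature, e.g.~\cite{SW02,ST09}) nor any frequency estimate; your proof avoids the hierarchical factorization step entirely and is quantitative, showing the number of holes available at level $N$ grows like $\rho^{-2N}$. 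One phrase should be repaired: in the totally real case a ball of radius $\rho^N$ may meet as many as $\sim(\beta\rho^2)^N\to\infty$ subsubtiles, so ``a controlled number'' cannot mean a bounded number; the argument survives precisely because the greedy accounting is by mass (each selection eliminates candidates of total measure $O(\rho^{2N})$ out of a reservoir of mass bounded below), which is what your closing sentence says and which should be the official formulation of that step.
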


\begin{proof}
Let $a\ne c \in \mcA$ such that the word $ca$ occurs in a power of $\sigma$, \emph{i.e.},
such that $\sigma^{n_0}(j_0)_{k_0} = a$ and $\sigma^{n_0}(j_0)_{k_0-1} = c$ for some $j_0\in\mcA$
and $n_0,k_0\geq 1$. We will dig holes in the subsubtile $\mcT_{\sigma^{n_0}}(a,j_0;k_0)\subset\Ts(a)$.
Let $x_1, \ldots, x_K$ be $K$ points in the interior of $\mcT_{\sigma^{n_0}}(a,j_0;k_0)$
and $B_1, \ldots, B_K \subset\mcT_{\sigma^{n_0}}(a,j_0;k_0)$ be $K$ disjoint closed discs such that
for each $m\in\{1,\ldots,K\}$, $B_m$ is centered at $x_m$.
We can assume that $B_1, \ldots, B_K$ all have the same radius $r > 0$
and that their boundaries do not intersect the boundary of $\mcT_{\sigma^{n_0}}(a,j_0;k_0) $.

We can choose $N$ such that all the subsubtiles of $\sigma^N$ have diameter less than $r/2$,
because each subsubtile of $\sigma^N$ is a copy of a subsubtile of $\sigma$ which is scaled down by $\bfh_\sigma^N$,
and $\bfh_\sigma$ is a contraction.
Thanks to Proposition~\ref{prop:GIFS} and Remark~\ref{rema:powers}, we can further choose
a set of $K$ occurrences $I := \{(j_1;k_1), \ldots, (j_p;k_p)\}\subset\occ(\sigma^N,a)$
such that for every $m \in \{1, \ldots, K\}$, we have $x_m \in \mcT_{\sigma^N}(a,j_m;k_m)$.
Each subsubtile $\mcT_{\sigma^N}(a,j_m;k_m)$ has diameter less than $r/2$, thus it is contained in $B_m$.
In particular, the subsubtiles  $\mcT_{\sigma^N}(a,j_1;k_1),\ldots, \mcT_{\sigma^N}(a,j_K;k_K)$
are all disjoint and contained in $\mcT_{\sigma^{n_0}}(a,j_0;k_0)$.

We claim that  $I \subset O_N$.  Indeed,  by assumption, there exist a prefix $p\in\mcA^*$
and a suffix $s\in\mcA^*$ satisfying $\sigma^{n_0}(j_0)=pas$, $|p|=k_0-1$ and $p_{k_0-1}=c$.
Moreover, it follows from the inclusion $\mcT_{\sigma^N}(a,j_m;k_m)\subset \mcT_{\sigma^{n_0}}(a,j_0;k_0)$
that there exist $p_m',s_m'\in\mcA^*$ with the property that $\sigma^N(j_m)=\sigma^{n_0}(p_m')\;pas\;\sigma^{n_0}(s_m')$
and $|\sigma^{n_0}(p')p|=k_m-1$. Thus $\sigma^N(j_m)_{k_m-1}=c$ and $(j_m;k_m)\in O_N$, for each $m\in\{1,\ldots,K\}$.

Each subsubtile $\mcT_{\sigma^N}(a,j_m;k_m)$ is a set which is homeomorphic to a disc and which is contained in $B_m$.
Therefore, by Sch\"onflies' theorem~\cite{Thom92},
the closure of $\Ts\setminus \bigcup_{(j;k) \in I} \mcT_{\sigma^N}(a,j;k)$
is homeomorphic to a disc from which $K$ open discs with disjoint boundaries have been removed.
The same property holds for
\[
\overline{\Ts(a) \setminus \bigcup_{(j;k) \in  I} \mcT_{\sigma^N}(a,j;k)}
    = \bigcup_{(j;k) \in \occ(\sigma^N,a)\setminus I} \mcT_{\sigma^N}(a,j;k)
\]
(note that the union on the right side is measure-disjoint by Proposition~\ref{prop:GIFS}).
\end{proof}

We are now able to prove our main result.

\begin{theo}
\label{theo:countablefg}
Let $K \geq 1$ be an integer and denote by $F_K$ the free group of rank $K$.
Then:
\begin{enumerate}
\item
\label{item:cfg_subtile}
There exists a $4$-letter primitive unimodular Pisot substitution $\tau$
such that the fundamental group of a subtile $\Tt(a)$ of the Rauzy fractal $\Tt$ is isomorphic to $F_K$,
and such that the subtiles of $\Tt$ have disjoint interiors.
\item
\label{item:cfg_rf}
There exists a $4$-letter primitive unimodular Pisot substitution $\theta$
such that the fundamental group of the Rauzy fractal $\Tth$ is isomorphic to $F_K$
and such that the subtiles of $\Tth$ have disjoint interiors.
\end{enumerate}
\end{theo}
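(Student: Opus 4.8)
The plan is to carry out the four-step strategy of the introduction, feeding a concrete disklike seed substitution successively through Proposition~\ref{prop:itersubtiles}, Proposition~\ref{prop:split} and Proposition~\ref{prop:fgaut}. I would take as seed the three-letter substitution $\sigma : 1 \mapsto 21,\ 2 \mapsto 31,\ 3 \mapsto 1$, whose incidence matrix has characteristic polynomial $x^3 - x^2 - x - 1$; its dominant eigenvalue is a cubic Pisot number, and a direct check gives $\det \Ms = 1$, so $\sigma$ is primitive, unimodular, irreducible and Pisot with dominant eigenvalue of degree $3$. It is classical (and visible in Figure~\ref{fig:strategy}\,\subref{fig:strategy-a}) that $\Ts$ and each subtile $\Ts(i)$ are homeomorphic to a closed disc, and that $\sigma$ satisfies the strong coincidence condition of Remark~\ref{rema:SCC}, so the subtiles $\Ts(i)$ are pairwise measure-disjoint. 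This makes $\sigma$ an admissible input for Proposition~\ref{prop:itersubtiles}.

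Given $K \geq 1$, I would apply Proposition~\ref{prop:itersubtiles} to produce two letters $a \neq c$, an integer $N \geq 1$, and a set $I \subset O_N \subseteq \occ(\sigma^N,a)$ for which both $\bigcup_{(j;k)\in\occ(\sigma^N,a)\setminus I}\mcT_{\sigma^N}(a,j;k)$ and $\overline{\Ts \setminus \bigcup_{(j;k)\in I}\mcT_{\sigma^N}(a,j;k)}$ are homeomorphic to a closed disc with $K$ disjoint open discs deleted from its interior. Such a planar region deformation retracts onto a wedge of $K$ circles, hence has fundamental group $F_K$; this is the single topological input that drives both statements.

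For Statement~\ref{item:cfg_subtile}, let $\tau$ be the splitting of $\sigma^N$ from $a$ to the new symbol $b = 4$ with occurrences $I$. By Lemma~\ref{lemm:split_lemm1} (together with Remark~\ref{rema:powers}, which identifies the subtiles of $\sigma^N$ with those of $\sigma$) the substitution $\tau$ is four-letter, primitive, unimodular and Pisot. Statement~\ref{item:split2} of Proposition~\ref{prop:split} gives $\Tt(a) = \bigcup_{(j;k)\in\occ(\sigma^N,a)\setminus I}\mcT_{\sigma^N}(a,j;k)$, which is precisely the first region above, so $\pi_1(\Tt(a)) \cong F_K$. For the disjointness of the interiors, Statement~\ref{item:split1} of Proposition~\ref{prop:split} gives $\Tt(i) = \Ts(i)$ when $i \notin \{a,b\}$, and these are measure-disjoint by strong coincidence, while $\Tt(a)$ and $\Tt(b)$ are unions of distinct subsubtiles of $\Ts(a)$, hence measure-disjoint by Proposition~\ref{prop:GIFS}; since each subtile equals the closure of its interior, measure-disjointness forces the interiors to be disjoint.

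For Statement~\ref{item:cfg_rf}, I would first check the hypothesis of Proposition~\ref{prop:fgaut} for this $\tau$ and $b$: the occurrences of $b$ in $\tau$ are exactly those in $I \subset O_N$, each preceded in $\sigma^N$, and hence in $\tau$ (since $c \neq a,b$ and splitting only rewrites $a$), by the single letter $c$ at an index $\geq 2$, so $c$ is the unique admissible predecessor. Setting $\theta = \rho_{cb}^{-1}\tau\rho_{cb}$, Proposition~\ref{prop:fgaut} then delivers a four-letter primitive unimodular Pisot substitution $\theta$ with $\Tth = \bigcup_{i \neq b}\Tt(i)$. Since $\Tt(a) \cup \Tt(b) = \Ts(a)$ and $\Tt(i) = \Ts(i)$ otherwise, this union is $\overline{\Ts \setminus \bigcup_{(j;k)\in I}\mcT_{\sigma^N}(a,j;k)}$, the second region above, whence $\pi_1(\Tth) \cong F_K$; the subtiles of $\theta$ again have disjoint interiors, because $\Tth(i) = \Tt(i)$ for $i \neq b,c$ while Statements~\ref{item:fgaut3} and~\ref{item:fgaut4} of Proposition~\ref{prop:fgaut} realize $\Tth(b)$ and $\Tth(c)$ as unions of distinct subsubtiles of $\Tt(c)$. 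I expect the genuine obstacle to be conceptual rather than computational, and it is exactly what the restriction $I \subset O_N$ is engineered to overcome: it guarantees that every letter $b$ in $\tau$ sits immediately after a $c$, so that conjugation by $\rho_{cb}$ collapses each pattern $cb$ to $b$ and introduces no inverse letters, making $\theta$ a bona fide substitution with a well-defined Rauzy fractal. The remaining work, identifying $\bigcup_{i\neq b}\Tt(i)$ and $\Tt(a)$ with the two regions of Proposition~\ref{prop:itersubtiles} and passing from measure-disjointness to disjoint interiors, is routine bookkeeping using the measure-zero boundaries of the subtiles and Proposition~\ref{prop:GIFS}.
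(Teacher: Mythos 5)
Your proposal is correct and follows essentially the same route as the paper's own proof: apply Proposition~\ref{prop:itersubtiles} to a disklike cubic seed, split the occurrences $I\subset O_N$ to get $\tau$ (Statement~\ref{item:cfg_subtile} via Proposition~\ref{prop:split}), then conjugate by $\rho_{cb}$ to get $\theta$ (Statement~\ref{item:cfg_rf} via Proposition~\ref{prop:fgaut}), with measure-disjointness giving disjoint interiors. The only deviation is your choice of the reversed Tribonacci substitution $1\mapsto 21,\,2\mapsto 31,\,3\mapsto 1$ as the seed instead of the Tribonacci substitution $1\mapsto 12,\,2\mapsto 13,\,3\mapsto 1$, which is immaterial since the paper notes that any primitive unimodular cubic Pisot substitution with disklike, measure-disjoint subtiles satisfying the strong coincidence condition will do.
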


\begin{proof}
Let $\sigma$ be any primitive unimodular substitution on the alphabet $\{1,2,3\}$
whose dominant eigenvalue is a cubic Pisot number,
and such that $\Ts$ and its subtiles are homeomorphic
to a disc and have disjoint interiors.
We also require $\sigma$ to satisfy the strong coincidence condition (see Remark~\ref{rema:SCC}).
One of the many possible choices for $\sigma$ is the Tribonacci substitution $1 \mapsto 12, 2 \mapsto 13, 3 \mapsto 1$,
for which the above properties can easily be verified~\cite{ST09,CANT}.

Let $a\ne c\in\mcA$, $N \geq 1$ and
\[
I \subset \underbrace{\{(j;k) \in \occ(\sigma^N,a) : \sigma^N(j)_{k-1} = c\}}_{\displaystyle=:O_N}\subset\occ(\sigma^N,a)
\]
as given in Proposition~\ref{prop:itersubtiles}.
Then $\bigcup_{(j;k) \in \occ(\sigma^N,a)\setminus I} \mcT_{\sigma^N}(a,j;k)$
is homeomorphic to a closed disc minus the union of $K$ disjoint open discs.
Thus the fundamental group of this set is isomorphic to $F_K$.
Let $\tau$ be the substitution obtained from $\sigma^N$ by splitting $a$ to a new symbol $b$ with occurrences $I$.
By Proposition~\ref{prop:split} we have
$\Tt(a) = \bigcup_{(j;k) \in \occ(\sigma^N,a)\setminus I} \mcT_{\sigma^N}(a,j;k)$,
so $\Tt(a)$ is isomorphic to $F_K$.

The subtiles of $\Tt$ are measure-disjoint.
Indeed, $\sigma^N$ is a primitive unimodular Pisot substitution,
thus for each $i\in\mcA$, the subsubtiles of $\mcT_{\sigma^N}(i)$ are measure-disjoint
for each $i\in\mcA$ (see~Proposition~\ref{prop:GIFS}).
Moreover, we chose $\sigma$ satisfying the strong coincidence condition,
so the subtiles $\Ts(i)=\mcT_{\sigma^N}(i)$ ($i\in\mcA$) of $\mcT_{\sigma^N}$
are also measure-disjoint (see Remark~\ref{rema:SCC}).
Therefore, Proposition~\ref{prop:split} implies that the subtiles of $\Tt$ are measure-disjoint
and Statement~\ref{item:cfg_subtile} is proved.

To prove Statement~\ref{item:cfg_rf}, we make use of the specific choice $I\subset O_N$ for the set of occurrences.
Indeed, a conjugation will have the effect of moving the subsubtiles associated with $I$
from the tile associated with $b$ into another tile, hence leaving $K$ holes in the fractal.
We apply Proposition~\ref{prop:fgaut} to $\tau$,
which is a primitive unimodular Pisot substitution on the alphabet $\mcA':=\mcA\cup\{b\}$,
and to $\theta= \rho_{cb}^{-1} \tau \rho_{cb}^{\phantom{-}}$.
The assumption on the occurrences of $b$ is fulfilled because $I\subset O_N$.
Remember that Proposition~\ref{prop:split} also asserted that $\Tt(i) = \mcT_{\sigma^N}(i)$ if $i \notin \{a,b\}$.
It follows that
\begin{align*}
\Tth
    &= \bigcup_{i\ne b}\Tt(i)
       \ = \ \bigcup_{i\ne a,b}\Tt(i) \ \cup \ \Tt(a) \\
    &= \underbrace{\bigcup_{i\ne a,b}\mcT_{\sigma^N}(i)}_{\bigcup_{i\ne a}\mcT_{\sigma^N}(i)}
       \ \cup \ \bigcup_{(j;k) \in \occ(\sigma^N,a)\setminus I} \mcT_{\sigma^N}(a,j;k) \\
    &= \overline{\Ts\setminus\bigcup_{(j;k) \in I} \mcT_{\sigma^N}(a,j;k)}
\end{align*}
and its fundamental group is isomorphic to $F_K$ by Proposition~\ref{prop:itersubtiles}.

Finally, as all subtiles $\Tt(i)$ and there subsubtiles are measure-disjoint,
we can infer from Proposition~\ref{prop:fgaut} that the subtiles of $\Tth$ are also measure-disjoint.
\end{proof}

\section{Conclusion}

Our results are obtained with a fixed number of symbols ($4$)
so there is no bound of the number of holes by the number of symbols,
which answers a question asked to the authors by Minervino.
It is not known whether there exists a $3$-letter Pisot substitution
with nontrivial but countable fundamental~group.

In further developments, we may try to realize higher homology/homotopy groups
for three-dimensional Rauzy fractals associated with Pisot numbers of degree $\geq 4$.
Indeed, illustrations of Figure~\ref{fig:3D_holes} lead to think that our methods could be adapted to higher dimensions,
since Propositions~\ref{prop:split} and~\ref{prop:fgaut} do not assume planarity of the tiles.
However this is out of reach for the moment,
because we need the essential preliminary fact that the subtiles are homeomorphic to a ball,
but appropriate criteria in $3$-dimensions do not currently exist.
A reason is that the theorem of Sch\"onflies used in Proposition~\ref{prop:itersubtiles} does not generalize to higher dimensions.
Developments in this direction have recently been obtained by Conner and Thuswaldner~\cite{CT14}.

\begin{figure}[ht]
\centering
\includegraphics[height=35mm]{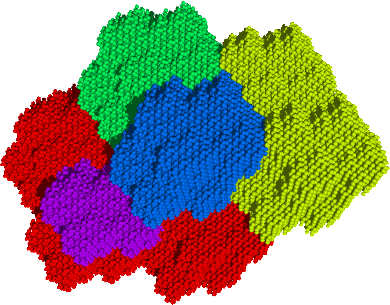}
\hfil
\includegraphics[height=35mm]{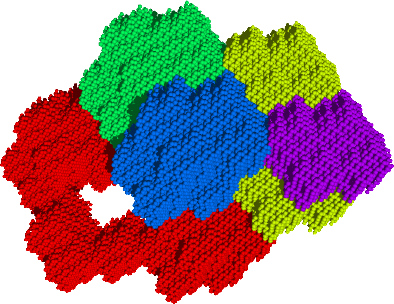}
\caption{
Drilling holes in the ``Quadribonacci'' substitution
$\sigma : 1 \mapsto 21, 2 \mapsto 31, 3 \mapsto 41, 4 \mapsto 1$.
We have splitted the occurrences $\{(1,8), (2,7)\}$
of $1$ to a new symbol $5$ in $\sigma^3$,
and conjugated the result by $\rho_{2,5}$
to obtain the substitution
$\theta : 1 \mapsto 4121315, 2 \mapsto 121315, 3 \mapsto 213121, 4 \mapsto 3121, 5 \mapsto 1213154121315$.
The Rauzy fractals of $\sigma$ and $\theta$ are plotted above left and right, respectively.
These fractals are three-dimensional because the associated Pisot eigenvalue is of degree $4$.
}
\label{fig:3D_holes}
\end{figure}

Another perspective for further work is to describe some \emph{uncountable}
fundamental groups for some simple examples,
such as the fractal shown in Figure~\ref{fig:fg_cool}.
This has successfully been done for some fractals such as the
Hawaiian earring or the Sierpi\'nski triangle~\cite{CC00,ADTW09}.

\begin{figure}[ht]
\centering
\includegraphics[height=38mm]{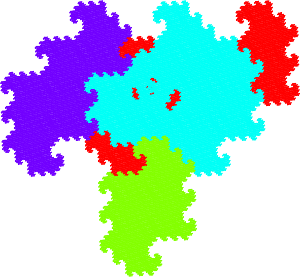}
\hfil
\includegraphics[height=38mm]{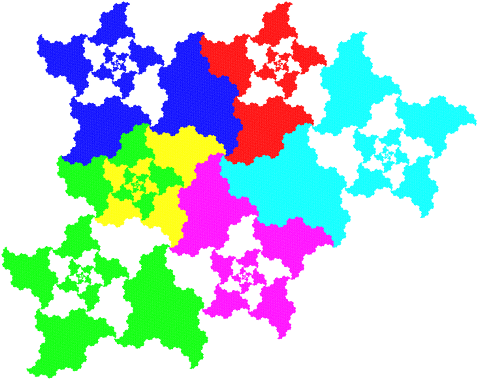}
\caption{
The Rauzy fractals of $1\mapsto2413, 2\mapsto43, 3\mapsto2433, 4\mapsto1$ (left)
and $1\mapsto 2, 2\mapsto 4,3, 3\mapsto 4, 4\mapsto 53, 5\mapsto 6, 6\mapsto 1$ (right).
The first picture suggests that one of the subtiles is homeomorphic
to a disc from which infinitely discs have been removed,
which would make it homeomorphic to the Hawaiian earring.
}
\label{fig:fg_cool}
\end{figure}

\paragraph{Acknowledgements}
We thank the anonymous referees for many suggestions improving the quality of the paper.

The first and the second authors were supported by Agence Nationale de la Recherche and Austrian Science Fund
through the ANR/FWF project FAN, \emph{Fractals and Numeration}, ANR-12-IS01-0002, FWF I1136.
The second author was also supported by the FWF Project 22 855, \emph{Topology of fractal tiles},
of the Austrian Science Fund.
The third author was supported by the Chinese National Natural Science Foundation Project 10971233.

\bibliographystyle{amsalpha}
\bibliography{biblio}

\end{document}